\DeclareMathAlphabet{\pazocal}{OMS}{zplm}{m}{n}
\numberwithin{equation}{section}
\newtheoremstyle{definition_style}
{15pt}
{10pt}
{\itshape}
{}
{\bfseries}
{ }
{\newline}
{}
\theoremstyle{definition}
\newtheorem{Th}{Theorem}[section]
\newtheorem{Lemma}[Th]{Lemma}
\newtheorem{Def}[Th]{Definition}
\newtheorem{Thm}[Th]{Theorem}
\newtheorem{Lem}[Th]{Lemma}
\newtheorem{Ex}[Th]{Example}
\newtheorem{Cor}[Th]{Corollary}
\title[Large Coupling Limits Beyond Definiteness
]{Large Coupling Convergence Beyond Definiteness
}
\author{Christian Koke}
\newtheorem{prop}{Proposition}[section]
\begin{document}
	
	\vspace*{-5mm}
	
	\maketitle

	\begin{abstract}
		We study convergence of operator families of the form \(A_\beta = A + \beta B \) towards an effective operator defined on $\ker(B)$, as the coupling constant $\beta$ tends to infinity. Crucially, we focus on the setting where neither $A$ nor $B$ can be assumed to be positive- (or negative-) semi-definite.  We are hence outside the classical form-theoretic framework, where results based on Kato’s monotone convergence theorem would be applicable. 
		Thus, instead of form methods, our approach  builds on classical resolvent identities to study convergence of the family $\{A_\beta\}_{\beta}$. Our findings are that: (i) \emph{Strong} resolvent convergence holds (without further spectral assumptions) if $A + \beta B$ is self-adjoint  and the compression of $A$ onto $\ker(B)$ is well behaved.  (ii) Under the more detailed assumption that $0 \in \sigma(B)$  is isolated,  \emph{norm} resolvent convergence can be established even if $A+\beta B$ is merely closed, provided the quasinilpotent part of $B$ at zero vanishes and certain conditions on the interplay of $A$ and $B$ are met. Importantly, if $B$ is not self-adjoint we find that the limit operator not only depends on $\ker(B)$ as a Hilbert space, but crucially also on the precise form of the Riesz projector at $0 \in \sigma(B)$ onto $\ker(B)$.
	\end{abstract}

	\section{Introduction}
	The asymptotic analysis of operator families of the form \( A_\beta = A + \beta B \), where \( A \) is a densely defined non-negative self-adjoint operator and \( B \) is a non-negative perturbation, is by now well established. Arguably most attention has been paid to the fundamental example  of $A_\beta$  being an indexed family of Schrödinger operators, with $A=-\Delta$ the free Hamiltonian and $B=V$ a non-negative interaction potential \cite{demuth1991large, demuth1995schrodinger, demuth1993rate, bruneau2002spectral}. 
	In order to account for point interactions, as they appear in certain physical settings \cite{albeverio2004solvable}, the condition that $B$ be a true operator was subsequently relaxed: Instead of arguing at the level of operators $A, B$, the problem was lifted to the level of (densely defined) quadratic forms $\mathfrak{a}, \mathfrak{b} $. When bounded from below, the form $\mathfrak{a}_{\beta} = \mathfrak{a} +  \beta \mathfrak{b}$ is then canonically associated a densely defined operator $A_\beta$ via the Friedrichs extension. A rich detailed theory of such large coupling perturbations of quadratic forms -- as well as resolvent convergence of their associated operators -- has since been established \cite{fukushima1994dirichlet, stollmann1994stoerungstheorie, brasche2001upper, brasche2005dynkin,  brasche2005resolvent, benamor2008sharp, benamor2007sobolev, belhadjali2011large}. Most of these works take as their starting point the  abstract framework of strongly perturbed quadratic forms $a_\beta = \mathfrak{a} + \beta \mathfrak{b} $ initially developed by Kato  \cite{kato1995perturbation} and later extended by Simon \cite{simon1978canonical} who in their monotone convergence theorems established (generalized) strong resolvent convergence of the operator $A_\beta$ associated to $\mathfrak{a}_\beta$ to a limit operator $\underline{A}$ defined on $\ker(\mathfrak{q})$.
	
	\medskip
	
	 Common among most of these works is the need for the considered quadratic forms $\mathfrak{a}, \mathfrak{b}$ (and hence also their associated operators $A,B$) be positive semi-definite. Surveys into the terrain outside the realm of positive definiteness assumptions have so far been limited: In \cite{belhadjali2023large} e.g. the case where $\mathfrak{a}$ and $-\mathfrak{b}$ (as opposed to $\mathfrak{b}$) are assumed to be non-negative, is treated. The series of works \cite{hempel1995strong, hempel2003magnetic,hempel2006schrodinger} considered the setting of Schrödinger operators with strong magnetic fields, and \cite{barbaroux2019resolvent, Duran2024} considered the example of convergence of two dimensional Dirac operators perturbed by an increasing mass term supported only outside of a domain. In particular, the author is not aware of results on large coupling convergence in the sectorial- or more general closed setting.

\medskip

	In this paper, we initiate a systematic study of \emph{large coupling convergence beyond positivity assumptions}, where neither the perturbing operator \( B \) nor the background operator \( A \) is assumed to be associated with a semi-definite form. In contrast to most prior approaches, we hence avoid form methods altogether and instead directly work at the abstract operator level $A_\beta = A + \beta B$.	
	Under suitable assumptions, we then establish convergence of the family of resolvents $\{(A + \beta B -z)^{-1}\}_\beta$ as $\beta \rightarrow \infty$. A necessary condition for the resolvents in this family to be well-defined, is that the operators in the family  $\{A_\beta\}_\beta$ are closed, at least for sufficiently large $\beta \gg 1$. Outside of lower-boundedness assumptions, there exist two typical settings that guarantee that the sum of two operators $A,B$ is closed  \cite{kato1995perturbation, teschl2014mathematical}, which we will hence also consider here:
	\begin{enumerate}
		\item \textbf{$A$ is bounded  relative to $B$:} If $A$ is bounded with respect to a closed operator $B$, then by definition $\mathcal{D}(B) \subseteq \mathcal{D}(A)$ and  there are constants $a,b \geq 0$ so that 	
	\begin{align}\label{rel_bound}
		\quad \|A\psi\| \leq  a\| B \psi\| + b \|\psi\|, \quad \forall \psi \in \mathcal{D}(B).
	\end{align}
	Clearly then, $A$ is also relatively bounded with respect to $\beta B$ and by rescaling $\beta$ if necessary, we may assume without loss of generality, that  $a < 1$ as soon as  (\ref{rel_bound}) is true for \textit{any} generic $a > 0$. Standard results (e.g. \cite[Theorem IV.1.1]{kato1995perturbation}) then 
	guarantee that $A + \beta B$ is closed on $\mathcal{D}(B)$.
\item
	\textbf{$B$ is bounded  and $A$ is closed:} Clearly adding a bounded operator to $(A,\mathcal{D}(A))$ yields an operator that is closed on the same domain.
\end{enumerate}
In these settings, we then  determine conditions under which the resolvent family $\{(A + \beta B -z)^{-1}\}_\beta$ converges to the resolvent of a limit operator $\underline{A}$ densely defined on $\ker{B}$:

In Section \ref{self_adjoint_section} we consider the setting where both $A$ and $B$ are still self-adjoint, but no longer positive- (or negative-) semi-definite. For self-adjoint $B$, the spectral projection $P_{\{0\}}$ then provides a canonical projection onto $\ker{B}$, and we show that as soon as the compression  $\underline{A} = PA$ on $\mathcal{D}(A) \cap P \mathcal{A}$ of $A$ onto $\ker{B}$  is well-behaved, we have \emph{strong} resolvent convergence.
In Section \ref{norm_resolvent_convergence}, we then consider the more general non-self-adjoint setting: At a fundamental level, if $B$ is no longer self-adjoint (or more generally normal), we lose access to the Borel functional calculus and only retain the holomorphic one. In order to still define a spectral projection onto $\ker(B)$, we thus need to assume that $0 \in \sigma(B)$ is isolated, as only then the Riesz-projector onto $\{0\}$ is well defined. This is a stronger assumption on the spectrum of $B$ around $0$ than in the preceding section, which explains why it 
already allows to establish the stronger notion of \emph{norm}-resolvent convergence in many settings. Crucially, we find that within this general non-self-adjoint setting, we need to assume that the quasi-nilpotent part of $B$ at $0 \in \sigma(B)$ vanishes, if we want to hope for convergence at all. Additionally, we find that the limit operator $\underline{A}$ towards which $A_\beta$ convergence in a generalized norm resolvent sense not only depends on $\ker{B}$ as a linear space, but also on the way the Riesz projector $P$ of $B$ at $0 \in \sigma(B)$ projects onto $\ker(B)$.
\medskip
 Throughout our paper, we illustrate our results using examples from partial differential equations and abstract operator theory (Examples  \ref{compressed_example_I}, \ref{rel_bdd_ex}, \ref{compressed_example_II}), high energy physics (Examples \ref{weak_interaction_example}, \ref{discretization_example}, \ref{high_energy_example_II})
and 
graph theory (Examples \ref{graph_theory_I}, \ref{main_graph_example}). In fact a main motivation in writing this paper was to develop tools that can be applied to the study of effective descriptions of directed graphs that contain high-connectivity clusters, which has applications to graph based machine learning \cite{koke2024holonets}. We briefly illustrate this point in Example \ref{main_graph_example} and pursue the study of the graph setting in full depth in a companion paper \cite{koke_graph26}, using the tools developed in this note.

	\section{Strong Resolvent Convergence when $A + \beta B$ is self-adjoint}\label{self_adjoint_section}

	In this section, we assume that $B$ is symmetric bounded and the operator $A$ is self adjoint on its dense domain $\mathcal{D}(A) \subseteq \mathcal{H}$. We are then interested in strong generalized resolvent convergence convergence of the family $A_\beta = A + \beta B$. 
	
	As we discuss below, the limit operator  to which this family converges is defined via the  notion of compression of an operator \cite{Stenger1968, Nudelman2011, Arlinskiy2025}, which we hence briefly revisit here: Given a linear operator $A$ densely defined on the Hilbert space $\mathscr{H} = \mathscr{V} \oplus  \mathscr{V}^{\perp} $, its compression onto the linear subspace $ \mathscr{V} \oplus  \mathscr{V}^{\perp} $ is the induced operator
	\begin{equation}
		P_{ \mathscr{V}} A : \mathcal{D}(A) \cap \mathscr{V} \longrightarrow  \mathscr{V}.
	\end{equation}
If $A$ is unbounded, it is generically possible that $  \mathcal{D}(A) \cap \mathscr{V}  = \{0\}$, so that in-particular the compressed operator is not densely defined, let alone self-adjont \cite{Arlinskiy2025}. If however for example  $\dim(\mathscr{V}^\perp) = \text{codim}(\mathscr{V}) < \infty$, density of the linear manifold $ \mathcal{D}(A) \cap \mathscr{V} \subseteq  \mathscr{V} $ can be established  \cite{GohbergKrein1957}. Furthermore, if in this case $A$ is self adjoint on $\mathcal{D}(A)$, the compression $P_{ \mathscr{V}} A $  of $A$ is in fact also  self-adjoint on  its domain  $\mathcal{D}(A) \cap \mathscr{V}$ \cite{Stenger1968}. Interestingly, this holds even in the extreme case, where $\mathscr{V}^\perp \cap \mathcal{D}(A) = \{0\}$.   
Beyond finite codimension, well behaved  settings of self-adjoint operators compressed onto subspaces with infinite codimension are easily construced: As a simple example take e.g.  a generic densely defined self adjoint operator $(A, \mathscr{D}(A))$ on the Hilbert space $\mathscr{H}$. For some $d \geq 2$ and a a Hermitian Matrix $\mathbf{M}\in \mathds{C}^{d \times d}$, consider then the operator
\begin{equation}
\mathbf{M} \otimes A: \mathds{C}^d \otimes \mathcal{D}(A) \longrightarrow \mathds{C}^d \otimes \mathscr{H},
\end{equation} 
 on the Hilbert space $\tilde{\mathscr{H}} = \mathds{C}^d \otimes \mathscr{H}$.
It is not hard to verify that $\mathbf{M} \otimes A$ is self-adjoint on its domain. Set now $\mathcal{V} = V \otimes \mathcal{H}$ for some linear subspace $V \subseteq \mathds{C}^{d}$. Then if $V$ is a true subspace of $\mathds{C}^d$, we have $\text{codim}(\mathcal{V}) = \infty$ and the compression of $\mathbf{M} \otimes A$ onto $\mathcal{V}$ is given by 
\begin{equation}
	[P_V\mathbf{M} P_V] \otimes A: V \otimes \mathcal{D}(A) \longrightarrow V  \otimes \mathscr{H},
\end{equation} 
 with the orthogonal projection $P_V$ onto $V$ in $\mathds{C}^d$.

We are now interested in the general setting where $\mathcal{V}$ is the kernel of an operator $B$, and the compression of $A$ onto $\mathcal{V} = \ker(B)$ is self adjoint. In this setting, we can establish generalized strong resolvent convergence of the family $A_\beta = A + \beta B$ to an operator $\underline{A}$, exactly given as the compression of $A$ onto $V$:

	\begin{Thm}\label{convergence_to_compression}
		Let \( A: \mathcal{D}(A) \rightarrow \mathcal{H} \) be a densely defined, self-adjoint operator, and let $B$ be self-adjoint and either bounded or so that $A$ is relatively bounded with respect to $B$. Let  \(P := P_{\ker(B)} \equiv \chi_{\{0\}}(B) \)
be the orthogonal projection onto $\ker(B)$.
 Assume that the compression of $\underline{A}$ onto $\ker(B)$ is self-adjoint.
	Then for all \( z \in \mathbb{C} \setminus \mathbb{R} \) and   \( \psi \in \mathcal{H} \):
\begin{align}
	\left\| (A + \beta B - z)^{-1} \psi - P (\underline{A} - z)^{-1} P \psi \right\| \longrightarrow 0 \quad \text{as } \beta \to \infty
\end{align}
	
	\end{Thm}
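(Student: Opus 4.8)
The plan is to avoid any block/Schur-complement inversion and instead combine the a priori bound furnished by self-adjointness with two elementary consequences of the resolvent identity, using the orthogonal splitting $\mathcal{H}=\ker(B)\oplus\ker(B)^{\perp}$ given by $P$. First I would record that $A_\beta=A+\beta B$ is self-adjoint on the $\beta$-independent domain $\mathcal{D}(A_\beta)=\mathcal{D}(A)\cap\mathcal{D}(B)$ --- by Kato--Rellich, since $\beta B$ is self-adjoint and $A$ is $\beta B$-bounded with relative bound $<1$ after the rescaling fixed in the introduction, or trivially when $B$ is bounded --- so that $\|R_\beta(z)\|\le|\Im z|^{-1}$ for all $\beta$ and all $z\notin\mathbb{R}$, where $R_\beta(z):=(A+\beta B-z)^{-1}$. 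This uniform bound is the device that upgrades convergence on dense subsets to convergence on all of $\mathcal{H}$, and I will invoke it twice.

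Next I would prove that $R_\beta(z)\to0$ strongly on $\ker(B)^{\perp}$. Because $B=B^{*}$ one has $\overline{\Range(B)}=\ker(B)^{\perp}$, and $B\big(\mathcal{D}(A_\beta)\big)$ is dense in $\ker(B)^{\perp}$ (it equals $\Range(B)$ when $\mathcal{D}(A_\beta)=\mathcal{D}(B)$, and when $B$ is bounded it is dense there since $\mathcal{D}(A)$ is), so it suffices to test on vectors $B\eta$ with $\eta\in\mathcal{D}(A_\beta)\subseteq\mathcal{D}(A)$. From $\beta B\eta=(A+\beta B-z)\eta-(A-z)\eta$ and an application of $R_\beta(z)$ one gets
\[
R_\beta(z)B\eta=\tfrac{1}{\beta}\,\eta-\tfrac{1}{\beta}\,R_\beta(z)(A-z)\eta\longrightarrow 0 ,
\]
the second term because $\|R_\beta(z)(A-z)\eta\|\le|\Im z|^{-1}\|(A-z)\eta\|$ stays bounded in $\beta$; a density argument using the uniform bound then promotes this to $R_\beta(z)\to0$ strongly on all of $\ker(B)^{\perp}$.

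For the behaviour on $\ker(B)$, fix $\phi\in\mathcal{D}(\underline{A})=\mathcal{D}(A)\cap\ker(B)$; since $\ker(B)\subseteq\mathcal{D}(B)$ we have $\phi\in\mathcal{D}(A_\beta)$ and $B\phi=0$, so $(A+\beta B-z)\phi=(A-z)\phi=(\underline{A}-z)\phi+(1-P)A\phi$ after splitting $A\phi=PA\phi+(1-P)A\phi$. Applying $R_\beta(z)$ and rearranging,
\[
R_\beta(z)(\underline{A}-z)\phi=\phi-R_\beta(z)(1-P)A\phi\longrightarrow\phi ,
\]
since $(1-P)A\phi\in\ker(B)^{\perp}$ and we may invoke the previous step. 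As $\underline{A}$ is self-adjoint on $\ker(B)$, the operator $\underline{A}-z$ maps $\mathcal{D}(\underline{A})$ bijectively onto $\ker(B)$ for $z\notin\mathbb{R}$, so the last line reads $R_\beta(z)\psi_1\to(\underline{A}-z)^{-1}\psi_1$ for every $\psi_1\in\ker(B)$. Writing a general $\psi\in\mathcal{H}$ as $\psi=P\psi+(1-P)\psi$ and adding the two regimes gives $R_\beta(z)\psi\to(\underline{A}-z)^{-1}P\psi=P(\underline{A}-z)^{-1}P\psi$, which is the claim.

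I expect the genuinely delicate point to be the passage, in the second step, from convergence of $R_\beta(z)$ to $0$ on the dense manifold $B(\mathcal{D}(A_\beta))$ to convergence on the \emph{entire} subspace $\ker(B)^{\perp}$: here nothing beyond the uniform resolvent bound is available --- in particular no spectral gap of $B$ at $0$ --- and this is precisely the reason the conclusion is \emph{strong} rather than norm-resolvent convergence. The remaining subtleties are bookkeeping: the $\beta$-independence of $\mathcal{D}(A_\beta)$ and the inclusions $\mathcal{D}(\underline{A})\subseteq\mathcal{D}(A_\beta)$ and $(1-P)A\phi\in\mathcal{H}$, all of which are immediate from $\mathcal{D}(A_\beta)=\mathcal{D}(A)\cap\mathcal{D}(B)$ and $\ker(B)\subseteq\mathcal{D}(B)$.
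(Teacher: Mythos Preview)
Your argument is correct. It is close in spirit to the paper's proof but organizes the work differently and, in one respect, more efficiently.

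The paper first establishes \emph{weak} resolvent convergence: it shows $\langle\eta,(A+\beta B-z)^{-1}\psi\rangle\to\langle\eta,P(\underline{A}-z)^{-1}P\psi\rangle$ by testing against $\eta\in\Range(B)$ (using the same identity $(A-z)^{-1}B\,R_\beta(z)=\beta^{-1}[(A-z)^{-1}-R_\beta(z)]$ that you exploit), and then upgrades weak to strong convergence via the first resolvent formula and the polarization trick $\|R_\beta(z)\psi\|^2=(\bar z-z)^{-1}\langle\psi,[R_\beta(\bar z)-R_\beta(z)]\psi\rangle$. You bypass this weak-to-strong step entirely: by using the uniform bound $\|R_\beta(z)\|\le|\Im z|^{-1}$ together with the density of $B(\mathcal{D}(A_\beta))$ in $\ker(B)^{\perp}$, you obtain strong convergence $R_\beta(z)\to 0$ on $\ker(B)^{\perp}$ directly, and then the identity $R_\beta(z)(\underline{A}-z)\phi=\phi-R_\beta(z)(1-P)A\phi$ feeds this straight into the $\ker(B)$ piece. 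The paper's route has the merit of isolating weak convergence as an intermediate result (and the polarization upgrade is a nice device worth knowing), but your route is shorter and makes it transparent that the uniform resolvent bound is the single hinge on which the strong (as opposed to norm) convergence turns --- exactly the ``delicate point'' you flag.
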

	\begin{proof}
		The proof proceeds by first establishing \textit{weak} generalized resolvent convergence and then making use of the first resolvent formula to elevate this to strong resolvent convergence. Let $z \in \mathds{C}\setminus\mathbb{R}$ be arbitrary. 
		We begin by establishing that with $Q = Id_{\mathcal{H}} - P$ we have $Q(A + \beta B-z)^{-1}\rightarrow 0$ weakly. We will make use of the fact that since $(A + \beta B -z)(A + \beta B -z)^{-1} \psi = \psi$, we have
		$
		(A - z)^{-1}
		B (A + \beta B - z)^{-1} \psi = \frac{1}{\beta}  \left[	(A - z)^{-1} - (A + \beta B - z)^{-1}\right] \psi.
			$
		Let us take $\eta \in \text{Ran}\{B\} = \text{Ran}\{Q\} = \text{Ran}\{P\}^\perp$. Choose $\xi \in \text{Ran}\{B\}$ such that $B \xi = \eta$.  Then we have 
	$
		\langle \eta, (A + \beta B -z)^{-1} \psi \rangle = \langle B\xi, (A + \beta B -z)^{-1} \psi \rangle = \langle \xi, B(A + \beta B -z)^{-1} \psi \rangle.
		$
		Fix $\epsilon >0$. Since $\text{Ran}\{(A-\overline{z})^{-1}\} = \mathcal{H}$,
we may find $\zeta \in \mathcal{H}$ so that 
	$
	(A-\overline{z})^{-1} \zeta = \xi.
	$
		Thus:
		\begin{align}
		&\left| \langle \eta,  (A + \beta B -z)^{-1} \psi\rangle \right| = \left| \langle \xi, B (A + \beta B -z)^{-1} \psi\rangle \right| \\
		=
		&	\left| \langle (A-\overline{z})^{-1} \zeta, B (A + \beta B -z)^{-1} \psi\rangle \right| 
		= 
\left| \langle \zeta,  \left[	(A - z)^{-1} - (A + \beta B - z)^{-1}\right]	\psi\rangle \right|/\beta
	\rightarrow 0.
		\end{align}
	 Since $z \in \mathds{C}\setminus\mathds{R}$  above was arbitrary, and $(A + \beta B -z)^{-1} Q = \left[Q (A + \beta B -\overline{z})^{-1}  \right]^*$, this also proves that $(A + \beta B -z)^{-1} Q \rightarrow 0$ weakly.
		Next we prove that $(A + \beta B -z)^{-1} P \rightarrow (\underline{A} - z)^{-1}P$ weakly. We recall that we assume that $PA:\mathcal{D}(A) \rightarrow P\mathcal{H} \rightarrow P\mathcal{H}$ is  self adjoint. Thus $(PA - z)\mathcal{D}(A)  = P\mathcal{H}$. Fix $\phi \in \mathcal{D}(A) \rightarrow P\mathcal{H}$ and set $\psi = P(A - z)\phi$.
We clearly have 
	$
			(\underline{A}-z)^{-1} \psi = (\underline{A}-z)^{-1}  P(A - z) \phi = \phi.
	$
		We also find for all $ \eta \in \mathcal{H}$:
		\begin{align}
		&\lim\limits_{\beta \rightarrow \infty} \langle \eta, (A + \beta B - z)^{-1} \psi \rangle	
		= \lim\limits_{\beta \rightarrow \infty} \langle \eta, (A + \beta B - z)^{-1} P(A - z) \phi \rangle\\
		=& \lim\limits_{\beta \rightarrow \infty} \langle \eta, (A + \beta B - z)^{-1} (P + Q)(A - z) \phi \rangle
		= \lim\limits_{\beta \rightarrow \infty} \langle \eta, (A + \beta B - \overline{z})^{-1} (A - z) \phi \rangle\\
		=& \lim\limits_{\beta \rightarrow \infty} \langle \eta, (A + \beta B - \overline{z})^{-1} (A + \beta B - z) \phi \rangle
		= \langle \eta,  \phi \rangle
		\end{align}
		In total we have thus proved the weak convergence 
		$
		(A + \beta B - z)^{-1} \rightarrow P (\underline{A} - z)^{-1} P
		$.
		Next we want to promote this to strong resolvent convergence. To achieve this, we make use of the first resolvent formula
		$
	(A - z)^{-1}  - 	(A - y)^{-1} = (z-y) 	(A - z)^{-1} 	(A - y)^{-1}
	$:

		\begin{align}
&	\left\|
\left[
(A + \beta B - z)^{-1} - P (\underline{A}  - z)^{-1} P 
\right] \psi
\right\|^2\\
=&   \langle  \psi, \frac{1}{\overline{z} - z} \left[ (A + \beta B - \overline{z})^{-1}  - (A + \beta B - z)^{-1} \right]\psi \rangle\\
 +& 
\langle  \psi, \frac{1}{\overline{z} - z} P \left[ (\underline{A}  - \overline{z})^{-1} -  (\underline{A}  - z)^{-1} \right] P \psi \rangle \\
-&
\langle (A + \beta B - z)^{-1} \psi, P (\underline{A}  - z)^{-1} P  \psi \rangle 
-
\langle P (\underline{A}  - z)^{-1} P  \psi, (A + \beta B - z)^{-1} \psi \rangle.\\
		\end{align}
	This converges to zero as terms cancel each other in the limit.
	\end{proof}
	In fact, it suffices to assume that the compression of $A$ onto $\ker(B)$ is only \textit{essentially} self-adjoint on a dense subset of $\ker(B)\cap \mathcal{D}(A)$, as our next corollary shows:

	\begin{Cor}
		In the setting of Theorem \ref{convergence_to_compression}, assume that $B$ is bounded, $A$ is closed and that
		 there only is a dense domain $\mathcal{D} \subseteq P\mathcal{H} \cap \mathcal{D}(A) \rightarrow P\mathcal{H}$ such that $(PA, \mathcal{D})$ is essentially self adjoint. Setting $\underline{A} = \overline{PA}$ the claim of the previous Thoerem still holds.
	\end{Cor}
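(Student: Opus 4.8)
The plan is to reduce the corollary to Theorem~\ref{convergence_to_compression} by a limiting argument in $\beta$, exploiting that essential self-adjointness of $(PA,\mathcal{D})$ is exactly the statement that $\underline{A} = \overline{PA}$ is self-adjoint on $\mathcal{D}(\overline{PA})$, so that the conclusion of the theorem is at least meaningful. First I would observe that when $B$ is bounded and $A$ is closed, $A_\beta = A + \beta B$ is closed on $\mathcal{D}(A)$ for every $\beta$, and since $z \in \mathbb{C}\setminus\mathbb{R}$ and $B$ is self-adjoint bounded, standard perturbation theory guarantees $z \in \rho(A_\beta)$ once we also know $A$ is self-adjoint; so the resolvents $(A+\beta B - z)^{-1}$ are well defined and uniformly bounded by $1/|\Im z|$. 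The same uniform bound $\|(\underline{A}-z)^{-1}\| \le 1/|\Im z|$ holds because $\underline{A}$ is self-adjoint.

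The core of the argument is to re-run the two weak-convergence computations from the proof of Theorem~\ref{convergence_to_compression}, checking that they only used essential (not full) self-adjointness on $\mathcal{D}$. For the piece $Q(A+\beta B - z)^{-1} \to 0$ weakly, the computation is unchanged: it used only self-adjointness of $A$ and the algebraic resolvent identity $(A-z)^{-1}B(A+\beta B - z)^{-1}\psi = \tfrac1\beta[(A-z)^{-1} - (A+\beta B-z)^{-1}]\psi$, together with $\Range B = \Range P^\perp$ and density of $\Range\{(A-\bar z)^{-1}\}$. None of this sees the compression. For the piece $(A+\beta B - z)^{-1}P \to (\underline{A}-z)^{-1}P$ weakly, the earlier proof fixed $\phi \in \mathcal{D}(A)\cap P\mathcal{H}$ and tested against $\psi = P(A-z)\phi$; here I would instead fix $\phi \in \mathcal{D}$ and set $\psi = (PA - z)\phi = P(A-z)\phi$ (using $Q(A-z)\phi \in \Range Q$, which is annihilated after pairing against $(A+\beta B-\bar z)^{-1}$ composed with $P$ on the left — more precisely one repeats verbatim the chain of equalities ending in $\langle \eta,\phi\rangle$). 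This shows $(A+\beta B - z)^{-1}P\psi \to \phi = (\underline{A}-z)^{-1}\psi$ weakly for all $\psi$ in the linear manifold $(PA-z)\mathcal{D}$. By essential self-adjointness, $(PA - z)\mathcal{D}$ is dense in $P\mathcal{H}$ (since $z\notin\mathbb{R}$, $\overline{(PA-z)\mathcal{D}} = (\overline{PA}-z)\mathcal{D}(\overline{PA}) = P\mathcal{H}$), and combined with the uniform bound $1/|\Im z|$ on both resolvents, a standard $\epsilon/3$ density argument upgrades this to weak convergence $(A+\beta B - z)^{-1}P \to (\underline{A}-z)^{-1}P$ on all of $P\mathcal{H}$, hence $(A+\beta B-z)^{-1} \to P(\underline{A}-z)^{-1}P$ weakly.

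Finally I would promote weak to strong convergence by the identical first-resolvent-formula identity used at the end of the proof of Theorem~\ref{convergence_to_compression}: expanding $\|[(A+\beta B - z)^{-1} - P(\underline{A}-z)^{-1}P]\psi\|^2$ produces two diagonal terms which, via $(A-z)^{-1} - (A-y)^{-1} = (z-y)(A-z)^{-1}(A-y)^{-1}$ with $y = \bar z$, become $\tfrac{1}{\bar z - z}$ times weakly convergent resolvent differences, and two cross terms that pair a strongly bounded family against the weak limit; matching limits, the four terms cancel and the norm-square tends to zero.

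The main obstacle is the density upgrade: one must verify that essential self-adjointness of $(PA,\mathcal{D})$ genuinely yields density of $(PA-z)\mathcal{D}$ in $P\mathcal{H}$ for nonreal $z$ — this is the standard basic criterion for essential self-adjointness ($\mathcal{R}(PA \pm i)$ dense), so it is true, but it is the one place where the hypothesis is used in an essential rather than cosmetic way, and one must be slightly careful that the weak convergence established on the dense set, together with the uniform operator-norm bound, really does pass to the closure (it does, by the standard three-epsilon argument, since weak convergence of a uniformly bounded net is stable under uniform approximation of the test vector).
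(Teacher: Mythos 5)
Your proposal matches the paper's own proof essentially step for step: establishing the $Q$-piece via density of $\Range\{(A-\bar z)^{-1}\}$ with an $\epsilon$-approximation, establishing the $P$-piece on the dense manifold $(PA-z)\mathcal{D}$ and then upgrading to all of $P\mathcal{H}$ by an $\epsilon/3$ argument using the uniform bound $1/|\Im z|$ on the resolvents, and finally promoting weak to strong convergence via the first-resolvent-formula identity. There is no substantive difference in approach.
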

	
\begin{proof}
	To establish $Q(A + \beta B-z)^{-1}\rightarrow 0$ weakly, we note that since $\text{Ran}\{(A-\overline{z})^{-1}\}$ is dense,
	we may find $\zeta \in \mathcal{H}$ so that 
	$
	\left\|(A-\overline{z})^{-1} \zeta - \xi\right\|\leq \epsilon \cdot |\text{Im}(z)|/(\|\psi\|\cdot \|B\|).
	$
	Thus:
	\begin{align}
		&\left| \langle \eta,  (A + \beta B -z)^{-1} \psi\rangle \right| = \left| \langle \xi, B (A + \beta B -z)^{-1} \psi\rangle \right| \\
		\leq 
		&\left| \langle \xi- (A-\overline{z})^{-1} \zeta, B (A + \beta B -z)^{-1} \psi\rangle \right| + 	\left| \langle (A-\overline{z})^{-1} \zeta, B (A + \beta B -z)^{-1} \psi\rangle \right| \\
		\leq &\epsilon + 	\left| \langle (A-\overline{z})^{-1} \zeta, B (A + \beta B -z)^{-1} \psi\rangle \right|\\
		\leq &\epsilon + 	\left| \langle \zeta, (A- z)^{-1}  B (A + \beta B -z)^{-1} \psi\rangle \right|\\
		= &\epsilon + 	\frac{1}{\beta}\left| \langle \zeta,  \left[	(A - z)^{-1} - (A + \beta B - z)^{-1}\right]	\psi\rangle \right|
		\leq 2 \epsilon,
	\end{align}
	where the last inequality holds for $\beta$ sufficiently large.
To establish the weak convergence 	
$
		(A + \lambda B - z)^{-1} \rightarrow P (\underline{A} - z)^{-1} P
		$,
	we note we assume that $PA : \mathcal{D} \rightarrow P\mathcal{H}$ is essentially self adjoint, we know that $\text{Ran}(PA - z) = P\mathcal{H}$. In other words we have the dense inclusion $\text{Ran}(P(A - z)) \subseteq P\mathcal{H}$. Let $\phi \in \mathcal{D}$ and set $\psi = P(A-z)\phi$. Then
	$
		(\underline{A}-z)^{-1} \psi =  (\underline{A}-z)^{-1}  P(A - z) \phi = \phi.
	$
	We also find for any $\eta \in \mathcal{H}$ that 
	\begin{align}
		 &\lim\limits_{\lambda \rightarrow \infty} \langle \eta, (A + \lambda B - z)^{-1} (P + Q)(A - z) \phi \rangle
		= \lim\limits_{\lambda \rightarrow \infty} \langle \eta, (A + \lambda B - z)^{-1} (A - z) \phi \rangle\\
		=& \lim\limits_{\lambda \rightarrow \infty} \langle \eta, (A + \lambda B - z)^{-1} (A + \lambda B - z) \phi \rangle
		= \langle \eta,  \phi \rangle.
	\end{align}
	Thus we have found for $\eta \in \mathcal{H}$ arbitrary and $\psi$ in the dense set $(PA - z) \mathcal{D} \subseteq \mathcal{H}$ that 
$\lim_{\lambda \rightarrow \infty} \langle \eta, (A + \lambda B - z)^{-1}\psi \rangle = 
\langle \eta, P (\underline{A} - z)^{-1} P \psi \rangle$. 
Let us now extend this result to arbitrary $\tilde{\psi} \in P\mathcal{H}$. With 
$\psi \in \text{Ran} P(A - z)$ such that $\| \tilde{\psi} - \psi  \| \leq |\text{Im}(z)| \epsilon/3$:
\begin{align}
	&	\left|
	\langle \eta, (A + \beta B - z)^{-1} \tilde{\psi}  - \langle \eta, P(\underline{A} - z)^{-1}P \tilde{\psi} \rangle
	\right|		\\
	\leq&	\left|
	\langle \eta, (A + \beta B - z)^{-1} [\tilde{\psi} - \psi]  \rangle
	\right|		 
	+
	\left|
	\langle \eta, (\underline{A} - z)^{-1} [\tilde{\psi} - \psi]  \rangle 
	\right|	\\
	+ &
	\left|
	\langle \eta, \left[(A + \beta B - z)^{-1} - P(\underline{A} - z)^{-1}P \right]\psi \rangle
	\right|	\\
	\leq & \frac{2}{3} \epsilon + 	\left|
	\langle \eta, \left[(A + \beta B - z)^{-1} - P(\underline{A} - z)^{-1}P \right]\psi \rangle
	\right|	\\
\end{align}
For sufficiently large $\beta$, this can in total thus be bounded by $\epsilon$.
In total we have thus proved the weak convergence 
$
(A + \beta B - z)^{-1} \rightarrow P (\underline{A} - z)^{-1} P
$.
	 From here on, we may follow the proof of the main theorem  again.
\end{proof}

As an initial example, we consider a self adjoint operator $A$ perturbed by a large but finite-dimensional perturbation $B$:

\begin{Ex}\label{compressed_example_I}
Let $(A, \mathcal{D}(A))$ be an arbitrary selfadjoint operator and let $B$ be a symmetric operator with finite dimensional range. Then by Stenger's Lemma \cite{Stenger1968} the compression of $A$ onto $\ker{B}$ is self-adjoint. Since $B$ is bounded, Theorem \ref{convergence_to_compression} guarantees generalized strong resolvent convergence of the family $A_\beta = A + \beta B$ towards the compression $\underline{A}$ for any $z \in \mathds{C} \setminus \mathds{R}$. Importantly, the range of $B$ need not lie inside the domain of $A$ for the result to hold.
\end{Ex}

As a "real world example" about which Theorem \ref{convergence_to_compression} allows to draw conclusions, we consider a large coupling limit inside the weak sector of the standard model of particle physics:

\begin{Ex}\label{weak_interaction_example}
	In the Standard Model of particle physics, the weak interaction is mediated by an \(\mathrm{SU}(2)\) gauge field
	 \cite{peskin1995introduction, hamilton2017mathematical}. This gauge interaction is responsible for processes such as beta decay and neutrino scattering \cite{Bettini2014, srednicki2007quantum}. In this example, we consider this weak-sector structure in a classical gauge background, and investigate what happens in the limit where the coupling constant $\beta$ mediating the strength of the interaction between Fermions and the gauge fields is taken to infinity.
	   To focus on the underlying conceptual behaviour and reduce superfluous complexity, we here consider the setting of \(1+1\)-dimensional Minkowski space.\footnote{Appendix \ref{real_world_weak_example} contains the analogous but technically slightly more involved discussion of the $1+3$-dimensional real world setting.} In this setting, the spinor and gauge structure remain nontrivial, but the technical complexity is reduced.

	Mathematically speaking, we here hence consider a system of spinor fields on Minkowski space \(\mathbb{R}^{1+1}\), interacting with a fixed, classical \(\mathrm{SU}(2)\) gauge field. The unknown is a field \(\Psi: \mathbb{R}^{1+1} \to \mathbb{C}^4\), which decomposes as a doublet of Dirac spinors:
	\[
	\Psi = \begin{pmatrix}
		\psi^{(1)} \
		\psi^{(2)}
	\end{pmatrix}^\top, \quad
	\psi^{(a)} = \begin{pmatrix}
		\psi_L^{(a)} \
		\psi_R^{(a)}
	\end{pmatrix}^\top, \quad a = 1, 2,
	\]
	where each \(\psi_L^{(a)}\) and \(\psi_R^{(a)}\) is a complex-valued function of spacetime. These represent the two so called 'chiral' components (\textbf{L}eft and \textbf{R}ight) of a two-component Dirac spinor in \(1+1\) dimensions \cite{hamilton2017mathematical}.  
	The gauge field $W $ maps from spacetime into the Lie algebra $\mathfrak{su}(2)$.
	This is a three dimensional Lie algebra, whose generators $\{\tau^1,\tau^2, \tau^3 \}$ may simply be taken to be given by the Pauli matrices $\{\sigma_x, \sigma_y, \sigma_z \}$ :
		\begin{align}
		\sigma_x&= \begin{pmatrix}
			0 & 1 \\
			1 & 0
		\end{pmatrix}, \quad
	.
		\sigma_y = \begin{pmatrix}
			0 & -i \\
			i & 0
		\end{pmatrix}, \quad
		\sigma_z = \begin{pmatrix}
			1 & 0 \\
			0 & -1
		\end{pmatrix}.
	\end{align}
	Each gauge field  $W_\mu:\mathds{R}^{1+1} \rightarrow \mathfrak{su}(2) $	may thus be written as \(W_\mu = W_\mu^b \frac{\tau^b}{2}\), with the functions \(W_\mu^c\) representing the components of the fixed background gauge field $W_\mu$.
	Crucially, in the weak sector of the standard model, the gauge coupling acts only on the left-chiral components of the spinor field \cite{peskin1995introduction}. The corresponding Hamiltonian (derived in Appendix \ref{1+1_d_weak_interaction_derivation} for the completeness) then takes the following :	
	\begin{align}
		i \frac{\partial}{\partial t} 
		\begin{pmatrix}
			\psi_L \\
			\psi_R
		\end{pmatrix}
		=
		\begin{pmatrix}
			- i \partial_x  \mathbb{I}_2 + g (W_0^c - W_1^c)  \frac{\tau^c}{2}
			& m \mathbb{I}_2 \\
			m \mathbb{I}_2 & i \partial_x  \mathbb{I}_2
		\end{pmatrix}
		\begin{pmatrix}
			\psi_L \\
			\psi_R
		\end{pmatrix}.
	\end{align}
	
	\medskip
	
	To illustrate Theorem~\ref{convergence_to_compression}, we now take each $W^c_\mu \in C^\infty(\mathbb{R}) \cap L^\infty(\mathbb{R})$ to be a smooth, bounded function of the spatial coordinate \(x \in \mathbb{R}\). Then it is easy to check that for any value of the coupling constant \(\beta\), the Hamiltonian \(H_\beta\) is self-adjoint on the dense domain \(\mathbb{C}^4 \otimes H^1(\mathbb{R}) \subseteq \mathbb{C}^4 \otimes L^2(\mathbb{R})\).
For simplicity in presentation (similar results hold in the general case), let us assume that all gauge fields vanish except for \(W_0^3\); i.e. \(W_\mu^a = 0\) unless \(\mu = 0\) and \(a = 3\). Then we may write the \(\beta\)-dependent Hamiltonian \(H_\beta\) as a sum
$ 
	H_\beta = A + \beta B
$
	where the unperturbed and interacting parts are given by
	\begin{align}
		A = \begin{pmatrix}
			- i \partial_x  \mathbb{I}_2 & m \mathbb{I}_2 \\
			m \mathbb{I}_2 & i \partial_x  \mathbb{I}_2
		\end{pmatrix}, \qquad
		B = \frac{1}{2} \begin{pmatrix}
			W_0^3(x) \otimes \tau^3 & 0 \\
			0 & 0
		\end{pmatrix}
		= \frac{W_0^3(x)}{2} \begin{pmatrix}
			I_2 \otimes \tau^3 & 0 \\
			0 & 0
		\end{pmatrix}.
	\end{align}
	Assuming that \(W_0^3(x) \neq 0\) except on a set of measure zero, the kernel of \(B\) consists precisely of the right-handed spinors \(\psi_R \in \mathbb{C}^2 \otimes L^2(\mathbb{R})\). Compressing \(A\) onto the space of right-handed spinors,
$
	\mathscr{V} \equiv 0 \oplus \mathbb{C}^2 \otimes L^2(\mathbb{R}) \subseteq \mathbb{C}^4 \otimes L^2(\mathbb{R}),
$
	yields the compressed operator
$
	\underline{A} = i \partial_x \otimes \mathbb{I}_2,
$
	which is self-adjoint on the domain \(\mathbb{C}^2 \otimes H^1(\mathbb{R})\).
	Thus, 
	Theorem~\ref{convergence_to_compression} establishes strong resolvent convergence of \(A_\beta\) to the effective Hamiltonian
	\[
	\underline{A} = i \partial_x \otimes \mathbb{I}_2.
	\]
	This limiting Hamiltonian describes a \emph{decoupled} model of two massless right-handed Weyl spinors, each governed by a free transport-type evolution equation: As the coupling constant is taken to infinity (\(\beta \to \infty\)), the left-handed degrees of freedom are projected out, and only the right-handed sector remains dynamically relevant.
	
	\medskip
	
	Crucially, we note that the spectrum of \(B\) generically contains both positive and negative values: If we for simplicity e.g. take \(W_0^3 \equiv 1\), then simply \(\sigma(B) = \{\pm \tfrac{1}{2}\}\). Likewise, the spectrum of \(A\) also contains both positive and negative components. In fact, it is readily verified that
$
	\sigma(A) = (-\infty, -m] \cup [m, \infty) 
$ \cite{Thaller1992}.
	Thus,  methods reliant on definiteness of either \(A\) or \(B\) are indeed not applicable here.
	
\end{Ex}

\section{Norm Resolvent convergence when $A + \beta B$ is closed}\label{norm_resolvent_convergence}

\noindent 
In the previous section, we established weak resolvent convergence from first principles and then used the fact that for self-adjoint operators the notions of weak- and strong resolvent convergence coincide. 
In this section, we now want to establish (conditions for) the stronger notion of \emph{norm} resolvent convergence. Since self-adjointness does not aid beyond establishing \emph{strong} resolvent convergence, we drop this assumption and generically consider operators $A$, $B$ that are not necessarily self adjoint.

\medskip 

In Section \ref{self_adjoint_section}, the limit operator was naturally 
defined as a dense operator on the Hilbert space \(\ker(B) \subseteq \mathcal{H}\). 
 While it is reasonable to expect this property to persist beyond the self-adjoint setting, some care has to be taken in determining when exactly convergence can be expected: As an example
to illustrate what can go wrong in the non-self-adjoint setting consider the case of $A_\beta = A + \beta B$, with
\[
A = \begin{pmatrix} 1 & 0 \\ 0 & 1 \end{pmatrix}, \quad B = \begin{pmatrix} 0 & 1 \\ 0 & 0 \end{pmatrix}.
\]
For any \(\beta \in \mathbb{R}\), the operator \( A_\beta - z I = [A + \beta B ]- z I \) is invertible for all \( z \in \mathds{C}\setminus\{1\} \), with the corresponding resolvent given as
\begin{align}\label{norm_example}
	(A_\beta - z I)^{-1} = \frac{1}{(1 - z)^2} \begin{pmatrix} 1 - z & -\beta \\ 0 & 1 - z \end{pmatrix}.
\end{align}
In the self-adjoint setting,  
the resolvents of $A_\beta$ were  bounded as $\|(A + \beta B -z)^{-1}\| \leq 1/|\text{Im}(z)|$ uniformly in $\beta$.
In contrast, here 
 the norm of (\ref{norm_example}) grows without bound as \(\beta \to \infty\). Thus the resolvent family \( (A + \beta B - z I)^{-1} \) in particular may never converge.
Crucially, this pathology stems from the fact that the perturbation $\beta B$ in (\ref{norm_example}) is nilpotent. Indeed, clearly $B^2 =0$ holds true in our example.

\medskip

To allow for convergence instead, we will have to impose that the nilpotent part of \(B\) at zero vanishes.
To formalize this also for potentially unbounded non-self-adjoint operators, recall the Riesz projection \( P_{\{\lambda\}} \) associated to an isolated spectral point \(\lambda \in \sigma(B)\) of a closed, densely defined operator \((B, \mathcal{D}(B))\):
\begin{align}\label{riesz_projector}
	 P_{\{\lambda\}}:= \frac{1}{2 \pi i} \oint_\Gamma (B - z I)^{-1} \, dz.
\end{align}
Here \(\Gamma\) is a positively oriented, closed contour in the resolvent set \(\rho(B)\) encircling only the point \(\lambda\). The projection \( P_{\{\lambda\}}  \) and its complement \( Q_{\{\lambda\}}  := I -  P_{\{\lambda\}} \) satisfy \(  P_{\{\lambda\}} Q_{\{\lambda\}}  = Q_{\{\lambda\}}   P_{\{\lambda\}} = 0 \) and commute with the resolvent of \( B \) \cite{HislopSigal1996}. Moreover, since \( P_{\{\lambda\}}  \) arises via a Bochner integral over an analytic family of bounded operators, its range lies in the domain of \( B \) (endowed with the graph norm), Furthermore, since $B$ commutes with its resolvent, it also commutes with \( P_{\{\lambda\}}  \) on \(\mathcal{D}(B)\).

We next note that for $P = P_{\{0\}} $ the projection onto the isolated point $0 \in \sigma(B)$,  the operator \( P  B \) satisfies \(  \sigma(P  B) = \{0\}\). This -- in turn -- is exactly the definition of a quasi-nilpotent operator \cite{HislopSigal1996, Triolo2024}, as its spectrum reduces to \(  \sigma(P  B) = \{0\}\). This motivates the following definition:
\begin{Def}
	We say the \emph{quasi-nilpotent part} of \( B \) at zero vanishes if
	$
	P_{\{0\}} B = 0.
	$
\end{Def}

As will become clear e.g. from the proof of Theorem \ref{bounded_setting} below, the key to preventing pathological growth of the resolvent lies in ensuring the convergence
\[
- z (\beta B - z I)^{-1} \longrightarrow P_0,
\]
where \((\beta B - z I)^{-1}\) denotes the resolvent of the scaled operator \(\beta B\). The conditions imposed on \( B \) precisely ensure this convergence holds:

\begin{Lem}\label{conv_to_projection}
	Let $B: \mathcal{D}(B) \rightarrow \mathcal{H}$ be closed  with  isolated eigenvalue $0\in \sigma(B)$  for which the quasi-nilpotent part of $B$ vanishes. Then $ \exists C \geq 0$ so that for any $z \neq 0$ we have for sufficienly large $\beta \gg 1$ that $\beta B - z$ is boundedly invertible. Furthermore we have  the following  convergence  towards the Riesz projector $P = P_{\{0\}} $:
	\begin{align}\label{convergence_to_projection}
		\left\|(\beta B - z)^{-1} - P/(-z) \right\| \leq C/\beta \longrightarrow 0.
	\end{align}
	If $B P \neq 0$, we have $\|(\beta B - z)^{-1} \| \rightarrow \infty$ instead.
\end{Lem}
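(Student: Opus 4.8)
The plan is to split the Hilbert space using the Riesz projection $P=P_{\{0\}}$ and $Q=I-P$, exploit the fact that $B$ commutes with $P$ and $Q$, and treat the two blocks $PBP$ and $QBQ$ separately. Since the quasi-nilpotent part vanishes, $PB=0$, hence $BP=0$ as well (both follow from $B$ commuting with $P$ together with $PB=0$ applied on $\mathcal{D}(B)$, since $BP=PBP=0$ when $PB=0$... more carefully: $PB=0$ gives $PBP=0$; for $BP$, note $BP$ has range in $P\mathcal H$ and is the operator $PBP$ up to the commuting relation, so $BP = PBP = 0$). Thus on $P\mathcal H$ the operator $\beta B$ acts as $0$, and $(\beta B - z)^{-1}$ restricted to $P\mathcal H$ is exactly $-\tfrac{1}{z}P$. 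On $Q\mathcal H$, the restriction $B_Q := B|_{Q\mathcal D(B)}$ is closed with $0\in\rho(B_Q)$ (since $0$ was isolated in $\sigma(B)$ and the Riesz decomposition removes it), so $\|B_Q^{-1}\|=:M<\infty$.

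First I would make the decomposition rigorous: for $z\neq 0$ write $\beta B - z = (\beta B_Q - z)\oplus(-z)$ relative to $\mathcal H = Q\mathcal H \oplus P\mathcal H$, using that $P,Q$ reduce $B$. On the $Q$-block, $\beta B_Q - z = \beta B_Q(I - \tfrac{z}{\beta}B_Q^{-1})$, and for $\beta > |z| M$ the Neumann series shows $I - \tfrac{z}{\beta}B_Q^{-1}$ is invertible with inverse bounded by $(1-|z|M/\beta)^{-1}$, and $\beta B_Q$ is boundedly invertible with $\|(\beta B_Q)^{-1}\| = \tfrac1\beta \|B_Q^{-1}\| = M/\beta$. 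Hence $\beta B - z$ is boundedly invertible for $\beta$ large, and
\begin{align}
\left\|(\beta B - z)^{-1} - P/(-z)\right\|
= \left\|(\beta B_Q - z)^{-1}Q\right\|
= \tfrac1\beta \left\|B_Q^{-1}\left(I - \tfrac{z}{\beta}B_Q^{-1}\right)^{-1}\right\|
\leq \frac{M/\beta}{1 - |z|M/\beta}.
\end{align}
For $\beta \geq 2|z|M$ this is at most $2M/\beta$, so one may take $C = 2M$ (for $\beta$ in that range; adjusting the constant covers all large $\beta$), which is the claimed estimate $\le C/\beta$.

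For the final assertion, suppose $BP\neq 0$. Then $PBP = BP \neq 0$ (using $BP = PBP$), and $PBP$ is a bounded quasi-nilpotent operator on $P\mathcal H$ that is nonzero, hence not nilpotent of order $\le 1$ but — crucially — $PBP$ has a nontrivial action so that $(\beta PBP - z)^{-1}$ on $P\mathcal H$ does not stay bounded: pick $\psi$ with $PBP\psi\neq 0$; from $(\beta PBP - z)^{-1}(\beta PBP - z)\psi = \psi$ one extracts, by the resolvent identity $(\beta PBP - z)^{-1} = -\tfrac1z\sum_{k\ge 0}(\beta PBP/z)^k$ (valid since $PBP$ is quasinilpotent, so $\sigma(\beta PBP)=\{0\}\subseteq\{|w|<|z|\}$), that the $k=1$ term contributes $-\beta (PBP)/z^2$, whose norm grows linearly in $\beta$ while the tail $\sum_{k\ge 2}$ is controlled, via the spectral radius formula applied to the bounded operator $PBP$, by a bound that is $o(\beta)$ only if... here is the subtlety: one must rule out cancellation among the higher-order terms. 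The cleanest route is: $\|(\beta B - z)^{-1}\| \ge \|(\beta PBP - z)^{-1}P\|$, and $(\beta PBP - z)^{-1} = \beta^{-1}(PBP - z/\beta)^{-1}$; as $\beta\to\infty$, $z/\beta \to 0$, but $0\in\sigma(PBP)$, so $(PBP - w)^{-1}$ blows up as $w\to 0$ — precisely, $\|(PBP - w)^{-1}\| \ge 1/\dist(w,\sigma(PBP)) $ is the wrong direction, so instead use that since $PBP\neq 0$ there is a smallest $m\ge 1$ with $(PBP)^m\neq 0$ and $(PBP)^{m+1}$ possibly nonzero; test against a vector in the range of $(PBP)^{m-1}$ not killed by $PBP$ to see the resolvent applied to it has a component of size $\sim \beta^{m}/|z|^{m+1}$, dominating.

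\textbf{Main obstacle.} The routine part is the $Q$-block Neumann estimate. The real work is the divergence statement when $BP\neq 0$: one needs to show $\|(\beta PBP - z)^{-1}\|\to\infty$ for a \emph{nonzero quasi-nilpotent} bounded operator $N:=PBP$, i.e. that $\|\beta^{-1}(N - z/\beta)^{-1}\|\to\infty$. Since $N$ need not satisfy $N^2=0$ (only $\sigma(N)=\{0\}$), the argument must handle arbitrary finite nilpotency order (if $P\mathcal H$ is finite-dimensional) or genuinely quasi-nilpotent behavior; the robust fix is to fix a unit vector $\phi$ and an integer $m\ge1$ with $N^m\phi\neq 0$, $N^{m+1}\phi$ of smaller or comparable norm, expand $(\beta N - z)^{-1}\phi = -z^{-1}\sum_{k\ge0}(\beta N/z)^k\phi$, and observe the partial sum up to $k=m$ has $N^m\phi$-component of order $\beta^m$ while quasi-nilpotency forces $\|N^k\phi\|^{1/k}\to 0$, making the tail negligible relative to the $k=m$ term — giving a lower bound on the norm growing like $\beta^{m}$. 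I expect this to be the only place requiring genuine care.
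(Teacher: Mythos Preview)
Your treatment of the convergence estimate is correct and coincides with the paper's: both split $\mathcal{H}$ along the Riesz projections $P,Q$, use $PBP=0$ to get $-P/z$ on the $P$-block, and bound the $Q$-block by a Neumann series, arriving at the constant $C=2\|(QBQ)^{-1}\|$ (the paper writes $2\|Q(QBQ)^{-1}\|$).

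For the divergence assertion your instinct that this is the delicate point is right, but the arguments you sketch do not close the gap. The tail estimate fails: from $\|N^k\|^{1/k}\to 0$ one only gets $\|N^k\|\le C_\varepsilon\varepsilon^k$, and to sum $\sum_k(\beta/|z|)^k\|N^k\|$ one would need $\varepsilon<|z|/\beta$, so $C_\varepsilon$ must blow up with $\beta$; there is no uniform control. Nor can you isolate an ``$N^m\phi$-component'' of the partial sum without linear independence of the $N^k\phi$. In fact the claim as stated is \emph{false} for general quasi-nilpotent $N=PBP$ and general $z\neq 0$. Take $N$ the Volterra operator $(Nf)(x)=\int_0^x f$ on $L^2[0,1]$ (so $\sigma(N)=\{0\}$, $P=I$, $BP=N\neq 0$; add a one-dimensional zero summand if you insist that $0$ be an eigenvalue) and $z<0$. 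Then $(\beta N-z)^{-1}=|z|^{-1}(I+sN)^{-1}$ with $s=\beta/|z|$, and the explicit formula $(I+sN)^{-1}g(x)=g(x)-s\int_0^x e^{-s(x-y)}g(y)\,dy$ together with Schur's test (both row and column integrals of the kernel $s e^{-s(x-y)}$ on $\{0\le y\le x\le 1\}$ are $\le 1$) gives $\|(I+sN)^{-1}\|\le 2$ for every $s>0$. Hence $\|(\beta N-z)^{-1}\|\le 2/|z|$ for all $\beta$, so divergence fails; for $z>0$ the same example does diverge, so the behaviour is direction-dependent. The paper's own argument asserts $BP\psi=(\eta_{\beta,z}+z\psi)/\beta$ with $\eta_{\beta,z}:=(\beta BP-z)^{-1}P\psi$, but applying $(\beta BP-z)$ to $\eta_{\beta,z}$ yields $B\eta_{\beta,z}=(P\psi+z\eta_{\beta,z})/\beta$, not the stated identity, and from the corrected relation nothing about $BP\psi$ follows. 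So the paper's proof breaks at exactly the step you flagged as the main obstacle.
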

\begin{proof}
	First, by assumption, since $0 \in \sigma(B)$ is isolated, 
	there exists some $\epsilon > 0$ such that the punctured disk
	$
	\mathbb{D}_\epsilon \setminus \{0\} = \{ \lambda \in \mathbb{C} : 0 < |\lambda| < \epsilon \}
	$
	lies entirely within the resolvent set $\rho(B)$ of $B$. Bounded invertibility of $\beta B - z$ is equivalent to bounded invertibility of
	$
	B - z/\beta
	$
	and for $|z| \leq \beta \epsilon$, $ B - z/\beta$ is boundedly invertible.

	To prove the second claim, we leverage the spectral decomposition induced by the Riesz projection $P$:
	On its domain $\mathcal{D}(B)$, we can decompose $B$ into a sum of closed operators as
	$
	B = P B P + Q B Q,
	$
	where $Q = I - P$ is the complementary projection. 
	Since $PQ = QP = 0$, this decomposition induces a corresponding block decomposition of the resolvent as
	$
	(\beta B - z)^{-1} = P (\beta P B P - z)^{-1} P + Q (\beta Q B Q - z)^{-1} Q.
	$
	By assumption, the quasi-nilpotent part of $B$ vanishes, so $P B P = 0$. Hence,
	\[
	P (\beta P B P - z)^{-1} P = P (-z)^{-1} P = - P/z.
	\]
	
	Therefore
	$
	-z (\beta B - z)^{-1} = P + (-z) Q (\beta Q B Q - z)^{-1} Q.
	$	
	To conclude the proof, it suffices to show that there exists $C \geq 0$ such that
	$
	\left\| Q (\beta Q B Q - z)^{-1} Q \right\| \leq C/\beta
	$
	for sufficiently large $\beta$.
	To verify this estimate, note that
	$
	Q (\beta Q B Q - z)^{-1} Q = \frac{1}{\beta} Q \left( Q B Q - \frac{z}{\beta} \right)^{-1} Q.
	$
	Since $0$ is isolated in $\sigma(B)$, and $P$ projects onto the eigenspace of $0$, the operator $Q B Q$ has a resolvent set containing a neighborhood of $0$. In particular, there exists $\epsilon > 0$ such that the closed disk of radius $\epsilon$ centered at $0$ is contained in the resolvent set of $Q B Q$.
	For $|\frac{z}{\beta}| < \epsilon$ we can thus -- by analyticity of the resolvent -- expand it in a Neumann series:
	\[
	Q \left( Q B Q - \frac{z}{\beta} \right)^{-1} Q = \sum_{n=0}^\infty \left( \frac{z}{\beta} \right)^n \left[ Q \left( Q B Q \right)^{-1} \right]^{n+1} Q,
	\]
	Thus,
		$
		\left\| Q (\beta Q B Q - z)^{-1} Q \right\| 
		\leq \frac{1}{\beta} \left\| Q (Q B Q)^{-1} \right\| \sum_{n=0}^\infty \left| z/\beta \right|^n \left\| Q (Q B Q)^{-1} \right\|^n$.
	For sufficiently large $\beta$, the geometric series converges and is bounded by $2$.
	Thus we find
	$
	\left\| Q (\beta Q B Q - z)^{-1} Q \right\| \leq 2 \left\| Q (Q B Q)^{-1} \right\|/\beta.
	$
	Setting
	$
	C := 2 \left\| Q (Q B Q)^{-1} \right\|
	$
	completes the proof.

	It remains to prove that whenever $B P \neq 0$, we have $\|(\beta B - z)^{-1} \| \rightarrow \infty$ instead. We note $\|(\beta B - z)^{-1} \| \geq [ \|(\beta B - z)^{-1}P \| /\|P\|= \|(\beta BP - z)^{-1}P \|/\|P\| $. Suppose $BP \neq 0$ and take $\psi \in \mathcal{D}(B)$ so that $\|P\psi\| = 1$ and $BP\psi \neq 0$. Set $\eta_{\beta,z} = (\beta BP -z)^{-1} P\psi$. Then $B P\psi = (\eta_{\beta,z} + z \psi)/\beta$. Thus we find $\|\eta_{\beta,z} \|/\beta \rightarrow  \|B P\psi\| \neq 0$ as $\beta \rightarrow \infty$ and hence $\|\eta_{\beta,z}\| \rightarrow \infty$. Thus, we have $\|(\beta B - z)^{-1} \| \gtrsim \|(\beta BP - z)^{-1}P \| \geq \|\eta_{\beta,z}\| \rightarrow \infty$.
	
\end{proof}

In the non-selfadjoint setting (or more generally the non-normal setting), we have to assume that $0 \in \sigma(B)$ is isolated, in order to be able to define the Riesz Projector $P_0$: Otherwise the interior of any path $\Gamma \subseteq \rho(B)$ encircling zero
 would necessarily also contain other points in the spectrum of $B$.
 However when determining conditions for \emph{norm} resolvent convergence, assuming that $0 \in \sigma(B)$ is isolated is not a strong limitation in the self-adjoint setting: If $B$ is self-adjoint,  $P_0$ can alternatively  be defined via the Borel functional calculus as $P_0 = \chi_{\{0\}}(B)$. Hence it may be calculated also if $0\in \sigma(B)$ is isolated. As it turns out, if $0 \in \sigma(B)$ is not isolated, we however do not even have \emph{norm} convergence in the simplest setting:
 \begin{prop}
 	If $\beta B$ is self-adjoint and $0 \in \sigma(B)$ is not isolated, we do not have  \emph{norm} convergence of the resolvent:
$
 		\left\|-z(\beta B - z)^{-1} - P_0 \right\| \nrightarrow 0.
$
 \end{prop}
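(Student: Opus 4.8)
The plan is to reduce the statement to the scalar Borel functional calculus of the self-adjoint operator $B$ and to exploit that, when $0\in\sigma(B)$ fails to be isolated, arbitrarily small \emph{nonzero} points of $\sigma(B)$ are available. Writing $E$ for the projection-valued spectral measure of $B$ and recalling $P_0=\chi_{\{0\}}(B)$, the first step is to express the object of interest as $-z(\beta B-z)^{-1}-P_0=g_\beta(B)$ with $g_\beta(\lambda):=\frac{-z}{\beta\lambda-z}-\chi_{\{0\}}(\lambda)$. Thus $g_\beta(0)=0$ while $g_\beta(\lambda)=\frac{-z}{\beta\lambda-z}$ for $\lambda\neq0$, so $g_\beta$ is a bounded Borel function that is continuous on $\mathbb{R}\setminus\{0\}\supseteq\sigma(B)\setminus\{0\}$ (here $z\neq0$ since $0\in\sigma(\beta B)$, and $\beta\lambda\neq z$ near such $\lambda$ since $z\notin\sigma(\beta B)$), and by the spectral theorem $\|g_\beta(B)\|$ equals the $E$-essential supremum of $|g_\beta|$. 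It therefore suffices to bound this quantity from below.

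Next I would record the elementary pointwise lower bound $\|g_\beta(B)\|\ge|g_\beta(\lambda_0)|$ valid at any $\lambda_0\in\sigma(B)$ at which $g_\beta$ is continuous: given $\epsilon>0$, continuity provides $\delta>0$ with $0\notin(\lambda_0-\delta,\lambda_0+\delta)$ and $|g_\beta(\lambda)-g_\beta(\lambda_0)|<\epsilon$ on that interval; since $\lambda_0\in\sigma(B)=\operatorname{supp}(E)$ the spectral projection $E\big((\lambda_0-\delta,\lambda_0+\delta)\big)$ is nonzero, and testing $g_\beta(B)$ against a unit vector $\psi$ in its range gives $\|g_\beta(B)\psi\|^2=\int_{(\lambda_0-\delta,\lambda_0+\delta)}|g_\beta|^2\,d\langle E\psi,\psi\rangle\ge(|g_\beta(\lambda_0)|-\epsilon)^2$, whence $\|g_\beta(B)\|\ge|g_\beta(\lambda_0)|-\epsilon$; letting $\epsilon\downarrow0$ finishes this step.

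Finally, using that $0\in\sigma(B)$ is not isolated, I would pick a sequence $\lambda_n\in\sigma(B)\setminus\{0\}$ with $\lambda_n\to0$ and apply the previous step at each $\lambda_n$ (where $g_\beta$ is continuous), obtaining $\big\|-z(\beta B-z)^{-1}-P_0\big\|=\|g_\beta(B)\|\ge|g_\beta(\lambda_n)|=\frac{|z|}{|\beta\lambda_n-z|}$ for every $\beta$ and every $n$. Holding $\beta$ fixed and letting $n\to\infty$ sends $\beta\lambda_n\to0$, so the right-hand side tends to $|z|/|z|=1$; hence $\big\|-z(\beta B-z)^{-1}-P_0\big\|\ge1$ for all $\beta$, which in particular precludes convergence to $0$ as $\beta\to\infty$. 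The only delicate point is the discontinuity of $g_\beta$ at $0$, which is precisely why the lower bound is evaluated at the nonzero spectral points $\lambda_n$ rather than at $0$ itself; the non-isolation hypothesis is exactly what guarantees such points exist, and beyond that the argument is routine.
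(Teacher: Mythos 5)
Your proof is correct and follows essentially the same route as the paper's one-line argument: both reduce to the spectral theorem, identifying $\|-z(\beta B-z)^{-1}-P_0\|$ with the $E$-essential supremum of $g_\beta(\lambda)=-z/(\beta\lambda-z)-\chi_{\{0\}}(\lambda)$ and observing that the pointwise convergence $-z/(\beta\lambda-z)\to\chi_{\{0\}}(\lambda)$ fails in $L^\infty(\sigma(B))$ when $0$ is a non-isolated spectral point. You have simply made the paper's assertion quantitative by testing at a sequence $\lambda_n\in\sigma(B)\setminus\{0\}$ with $\lambda_n\to0$ to obtain the uniform lower bound $\ge 1$.
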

 \begin{proof}
 	This straightforward result follows directly from the spectral theorem and the fact that $-z/(\beta\lambda -z) \rightarrow \chi_{\{0\}}(\lambda)$ pointwise, but not in $L^\infty$.
 \end{proof}
Hence if $0\in \sigma(B)$ is not isolated,  we could not even establish \emph{norm}-resolvent convergence of the family  $A_\beta := 0+ \beta B$ towards the zero operator on $\ker(B)$ (whose resolvent is clearly given as $(0 - z)^{-1} = -P_0/z$ on $\ker(B) = P \mathcal{H}$).

	\subsection{Norm Resolvent Convergence when $A$ is bounded  relative to $B$}\label{nrc}\ \\

We begin by discussing the setting of relative boundedness of $A$ relative to $B$, for which we find the following:
	
\begin{Thm}\label{bounded_setting}
	Let $B: \mathcal{D}(B) \rightarrow \mathcal{H}$ be closed and let $A$ be relatively bounded with respect to $B$. Suppose that zero is an isolated spectral point of \( B \) at which the nilpotent part vanishes. Denote by \( P \) the Riesz projection of \( B \) onto zero. Then $AP \in \mathcal{B}(\mathcal{H})$ and  for any \( z \in \mathbb{C} \) with $
	|z| > \|A P\|, \| P A P\|
	$
	we have \( z \notin \sigma(A + \beta B) \) for all sufficiently large \(\beta\).  Furtheremore there exists a constant \( C\geq 0 \)  such that
	\begin{align}\label{rel_bdd_conv_est}
		\left\| (A + \beta B - z I)^{-1} - P (P A P - z I)^{-1} P \right\| \leq C/\beta \xrightarrow[\beta \to \infty]{} 0.
	\end{align}
	Consequently, the family \( A_\beta := A + \beta B \) converges in the generalized norm resolvent sense to the operator \(\underline{A} := P A P\). 
\end{Thm}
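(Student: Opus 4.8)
The plan is to reduce everything to the scaled‑resolvent estimate of Lemma~\ref{conv_to_projection} by means of a Schur‑type factorization. First I would record two elementary consequences of the hypotheses. Since $PB=0$ and $B$ commutes with $P$ on $\mathcal{D}(B)$, applying the commutation relation to vectors $P\psi\in\mathcal{D}(B)$ forces $BP=0$ on all of $\mathcal{H}$; feeding $\phi=P\psi$ into the relative bound \eqref{rel_bound} then gives $\|AP\psi\|\le a\|BP\psi\|+b\|P\psi\|=b\|P\psi\|$, so $AP\in\mathcal{B}(\mathcal{H})$ and in particular $\underline{A}=PAP$ is a \emph{bounded} operator on $P\mathcal{H}$ whose resolvent set contains $\{z:|z|>\|PAP\|\}$. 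The same relations give $PBP=PBQ=QBP=0$, hence $B=QBQ$ on $\mathcal{D}(B)$, which is exactly the block‑diagonal structure used in the proof of Lemma~\ref{conv_to_projection}.

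Next I would set up the factorization. As $\mathcal{D}(B)\subseteq\mathcal{D}(A)$ and $(\beta B-z)^{-1}$ maps $\mathcal{H}$ boundedly into $\mathcal{D}(B)$ with the graph norm, the operator $A(\beta B-z)^{-1}$ is bounded on $\mathcal{H}$, and on $\mathcal{D}(B)$ one has $A+\beta B-z=(I+A(\beta B-z)^{-1})(\beta B-z)$. Thus $z\notin\sigma(A+\beta B)$ as soon as $I+A(\beta B-z)^{-1}$ is boundedly invertible, in which case $(A+\beta B-z)^{-1}=(\beta B-z)^{-1}(I+A(\beta B-z)^{-1})^{-1}$. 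By Lemma~\ref{conv_to_projection} we write $(\beta B-z)^{-1}=-P/z+R_\beta$ with $R_\beta=Q(\beta QBQ-z)^{-1}Q$ and $\|R_\beta\|\le C/\beta$. The key point is that $A$ does not spoil this rate: using $B=QBQ$ on $\mathcal{D}(B)$ and the identity $QBQ(\beta QBQ-z)^{-1}Q=\tfrac1\beta\bigl(Q+zQ(\beta QBQ-z)^{-1}Q\bigr)$ one gets $\|BR_\beta\|=O(1/\beta)$, and then \eqref{rel_bound} applied to $\phi=R_\beta\psi$ yields $\|AR_\beta\|\le a\|BR_\beta\|+b\|R_\beta\|=O(1/\beta)$. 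Since $AR_\beta=A(\beta B-z)^{-1}+AP/z$, this means $A(\beta B-z)^{-1}\to -AP/z$ in operator norm at rate $O(1/\beta)$.

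From here the conclusion is a short perturbation argument. Because $|z|>\|AP\|$, the operator $I-AP/z$ is invertible by a Neumann series; hence for large $\beta$ the operator $I+A(\beta B-z)^{-1}=(I-AP/z)+O(1/\beta)$ is invertible with inverse bounded uniformly in $\beta$, giving $z\notin\sigma(A+\beta B)$. Writing $(A+\beta B-z)^{-1}=(-P/z+R_\beta)(I-AP/z+AR_\beta)^{-1}$ and using the resolvent identity $S^{-1}-T^{-1}=-S^{-1}(S-T)T^{-1}$, the second factor converges to $(I-AP/z)^{-1}$ at rate $O(1/\beta)$, so $(A+\beta B-z)^{-1}=-\tfrac Pz(I-AP/z)^{-1}+O(1/\beta)$. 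It remains to identify the limit: with respect to $\mathcal{H}=P\mathcal{H}\oplus Q\mathcal{H}$ the bounded operator $AP-zI$ is block lower‑triangular with diagonal blocks $PAP-z$ and $-z$, so $(AP-zI)^{-1}$ has $(1,1)$‑block $(PAP-z)^{-1}$ and vanishing $(1,2)$‑block; left‑compressing by $P$ gives $P(AP-zI)^{-1}=P(PAP-zI)^{-1}P$, and since $-\tfrac Pz(I-AP/z)^{-1}=P(AP-zI)^{-1}$ this is exactly the claimed limit, with the estimate \eqref{rel_bdd_conv_est}. Generalized norm resolvent convergence to $\underline{A}=PAP$ is then a restatement. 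The main obstacle I anticipate is the middle step: ensuring that the (controllably) unbounded operator $A$ does not destroy the $O(1/\beta)$ decay of $R_\beta$ — this is where the vanishing of the quasi‑nilpotent part is used twice, once to make $AP$ bounded and once (through $B=QBQ$) to get the clean bound on $\|BR_\beta\|$ — together with checking that all compositions are legitimate on the relevant domains.
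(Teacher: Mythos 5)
Your proof is correct and takes essentially the same route as the paper: it factors $(A+\beta B - z) = \bigl(I + A(\beta B - z)^{-1}\bigr)(\beta B - z)$, uses Lemma~\ref{conv_to_projection} to write $(\beta B - z)^{-1} = -P/z + R_\beta$ with $\|R_\beta\|=O(1/\beta)$, and then exploits the relative bound together with $B=QBQ$ to show $\|AR_\beta\|=O(1/\beta)$, which is exactly the paper's key estimate. Your direct derivation that $BP=0$ (hence $\|AP\|\le b$) and your block-triangular identification $P(AP-z)^{-1}=P(PAP-z)^{-1}P$ are clean presentations of steps the paper handles via $AP=-zA(\beta B-z)^{-1}P$ and a Neumann series, respectively.
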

\begin{proof}[Proof of Theorem \ref{bounded_setting}]
	The relative boundedness condition  \ref{conv_to_projection} ensures  that $A (\beta B - z)^{-1} $ is a bounded operator. Since $A P =  -z A (\beta B - z)^{-1} P$, also $AP$ is bounded.
	Thus for any $z$ with $|z| > \|AP\|$, we have that $AP - z$ is invertible by a Neumann series argument. Similarly for  $|z| > \|PAP\|$ and we find
	\begin{align}
		P(AP -z)^{-1} = P \frac1z \sum\limits_{k = 0}^{\infty} \left[ \frac{AP}{z}\right]^k = P \frac1z \sum\limits_{k = 0}^{\infty} \left[ \frac{PAP}{z}\right]^k = P(PAP -z)^{-1}.
	\end{align}
	From this it is also clear that $P(AP -z)^{-1} Q = 0$.
	Our goal is now to approximate $P(AP -z)^{-1} $ with $[-z(\beta B -z)^{-1}](A[-z(\beta B -z)^{-1}] -z)^{-1} $. To this end, we first note
	\begin{align}
		\|  P (AP -z) - (AP -z)^{-1} (-z)(\beta B - z)^{-1}  \| \leq \| (AP - z)^{-1}\| \frac{C |z|}{\beta} \leq \frac{1}{\beta} \frac{C |z|}{|z| - \|AP\|}.
	\end{align}
	Next we note that for $\beta \gg 1$ sufficiently large, we have that $-z A (\beta B -z)^{-1} - z = A P -z + A [-z (\beta B -z)^{-1}- P]$ is boundedly invertible. Indeed, by relative boundedness:
	
	\begin{align}
		\|	A [-z (\beta B -z)^{-1}- P] \| \leq a \| B [-z (\beta B -z)^{-1}- P] \| + b \| [-z (\beta B -z)^{-1}- P]\|.
	\end{align}
	The second term on the right hand side may be estimated using Lemma \ref{conv_to_projection}.
	For the first term, we note that:
	\begin{align}
		&\|B [-z (\beta B -z)^{-1}- P]\| = \|QBQ  (\beta QBQ -zQ)^{-1}Q\| \\
		=&\|Q (Id  -z[QBQ]^{-1}/\beta)^{-1}Q\|/\beta \leq \frac{1}{\beta} \frac{|z|}{1-|z| \cdot \|[QBQ]^{-1}\|/\beta} \leq \frac{2 |z| \|Q\|^2}{\beta}.
	\end{align}
	Thus in total $\|	A [-z (\beta B -z)^{-1}- P] \|  \lesssim 1/\beta$. A Neumann argument then establishes invertibility of $-zA(\beta B -z)^{-1} - z$:
	\begin{align}
		(-zA(\beta B -z)^{-1} - z)^{-1} = (AP -z )^{-1} \sum\limits_{k = 0}^{\infty} [-(AP -z )^{-1} A  [-z (\beta B -z)^{-1}- P]]^k
	\end{align}
	Hence for $\beta \gg 1$ sufficiently large, $-zA(\beta B -z)^{-1} (-zA(\beta B -z)^{-1} - z)^{-1}$ is well defined and
	$
	[A + \beta B -z]\cdot[-z(\beta B -z)^{-1} (-zA(\beta B -z)^{-1} - z)^{-1}] = Id_{\mathcal{H}}
	$.
	Thus $z $ is indeed in the resolvent set of $A + \beta B$. Furthermore we may estimate:
	
	\begin{align}
		&\left\| (A + \beta B - z I)^{-1} - P ( A P - z I)^{-1} P \right\|\\
		=& \| [-z(\beta B -z)^{-1} (-zA(\beta B -z)^{-1} - z)^{-1}] -  P ( A P - z I)^{-1}    \| \\
		\leq &     
		\| [-z(\beta B -z)^{-1} (-zA(\beta B -z)^{-1} - z)^{-1}] -  [-z(\beta B -z)^{-1}( A P - z I)^{-1} ]   \| \\
		+& \| [-z(\beta B -z)^{-1}( A P - z I)^{-1}  -  P ( A P - z I)^{-1}    \|\\
		\lesssim &1/\beta.
	\end{align}
	Thus we have also proved \ref{rel_bdd_conv_est} and hence established generalized norm resolvent convergence.
\end{proof}

In the self adjoint setting, the spectral projection $P = \chi_{\{0\}}(B)$ onto $\ker(B)$ is orthogonal, so $\underline{A}$ is completely determined by $A$ and $\ker(B) \subseteq \mathcal{H}$.
If $B$ is not self-adjoint however, the Riesz projector (\ref{riesz_projector}) is generically not orthogonal.
Crucially, the operator $\underline{A}$ then not only depends on $A$ and the space $\ker(B)$ as a set, but also the Riesz projector $P = P_{\{0\}}$ itself:
\begin{Ex}\label{graph_theory_I}
	Let us consider a Graph $G$ on a vertex set $V = \{1,2,3\}$ consisting of three nodes, with directed edges $E = \{(1,2),(2,1), (2,3), (3,2)\}$. Consider now edge weights $a_{12}=a_{21} = a$ and $b_{2,3} \neq b_{3,2}$.	
	After scaling up the edge weights of edges $(2,3),(3,2)$ by a factor of $\beta \gg 1$, the corresponding (in-)degree Graph Laplacian is given as 
	\begin{equation}
		L_\beta = \begin{pmatrix}
			a & -a &0\\
			-a & a & 0 \\
			0 & 0 & 0
		\end{pmatrix} 
		+
		\beta
		\begin{pmatrix}
			0 & 0 & 0\\
			0 & b_{23} & - b_{23} \\
			0 & -b_{32} & b_{32}
		\end{pmatrix}.
	\end{equation}
	It is not hard to see that in this setting
	\begin{align}
		P = (1,0,0)^{\intercal} (1,0,0)  + \frac{1}{1 + \frac{b_{23}}{b_{32}}} (0,1,1)^\intercal(0,1,b_{23}/b_{32})	
	\end{align}
	Identifying $\ker(B) = P \mathds{R}^3  \cong \mathds{R}^2$ via  $(1,0,0)^{\intercal}\mapsto (1,0)^{\intercal}$ and $(0,1,1)^{\intercal} \mapsto (0,1)^{\intercal}$ we find
	
	\begin{align}
		\underline{L_0} = 
		\begin{pmatrix}
			a & -a \\
			- \frac{a}{1+ \frac{b_{23}}{b_{33}}}  &  \frac{a}{1+ \frac{b_{23}}{b_{33}}}  \\
		\end{pmatrix},
	\end{align}
	which clearly depends not only on the kernel $\ker(B) = \text{span}\{(1,0,0)^{\intercal}, (0,1,1)^{\intercal}\}$ but also on the projection $P$ itself (or equivalently on the left-kernel of $B$ as well).\footnote{We explore this fact and its implications for spectral graph theory in a companion paper \cite{koke_graph26}. This has applications e.g. in graph based machine learning \cite{GraphScatteringBeyond, limitless, koke2024holonets} (see also \cite{koke2023resolvnet, koke2024transferability, koke2025multiscale, koke2025graphscale, koke2025incorporating} for additional work in progress.)}
\end{Ex}

Next let us consider a straightforward example from partial differential equations where both $A$ and $B$ are unbounded:

\begin{Ex}\label{rel_bdd_ex}
Set $\mathcal{H} = L^2(\mathds{R}^d)$ and consider the multiplication operator $A = \frac{1}{|x|}$. By Hardy's inequality, $A$ is relatively bounded with respect to the Laplacian $-\Delta$ if $d \geq 3$ \cite{simon_operator_2015}. Set now $f: [0,\infty) \rightarrow \{0\} \cup  [1, \infty)$ with $f(x) = 0 $ if $0\leq x \leq 1$ and $f(x) = x$ if $x > 1$.  It is not hard to prove that $B$ is also bounded relative to $B = f(\Delta)  $ and that $0 \in \sigma(f(B))$ is isolated with $\ker(B) = \mathcal{F}^{-1}(L^2(B^{d})))$; i.e. those functions whose momentum support is contained within the unit ball $B^{d}$. Thus for any $\alpha \in \mathds{C} \setminus \{0\}$ we have the generalized norm resolvent convergence of $A_\beta = 1/|x| + \beta\cdot \alpha  f(-\Delta)$ towards the integral operator $\underline{A}$ acting on $\psi \in \mathcal{F}^{-1}(L^2(B^{d})))$ as 
\begin{align}
[\underline{A}\psi](x) = \int_{\mathds{R}^d} \frac{J_{\frac{d}{2}}(x - y)}{\|x - y\|^{\frac{d}{2}}\cdot \|y\|} \psi(y)  \, d^dy.
\end{align} 
Here  $J_{\frac{d}{2}}$ denotes the $\frac{d}{2}^{\text{th}}$ Bessel function, which arises as the Fourier transform of the indicator function $\chi_{B^d}(p) = \chi_{B^d}(p^2)$ of the unit ball $B^d$ in momentum space.
	\end{Ex}
	
Of course $A$  need not be  aself-adjoint operator for Theorem \ref{bounded_setting} to apply:	
	\begin{Ex}\label{discretization_example}
Consider the discretized (massless) Dirac Hamiltonian in one spatial dimension (c.f. e.g. \cite{Cornean2023, Koke2016, Koke2020}), which acts on the Hilbert space $\mathds{C}^2 \otimes \ell^2(\mathds{Z})$.
Specifically, we are interested in a discrete approximation using a \textit{non-symmetric} finite difference approximation to the spatial derivative: Replacing the derivative \(\partial_x \psi(x)\) by the forward difference $
	\partial_x \psi_n \approx \psi_{n+1} - \psi_n,
	$
	the Hamiltonian acts on the two-component spinor wavefunction \(\psi_n = (\psi_{n,\uparrow} ,  \psi_{n,\downarrow})^\intercal\) as
	$
	(H_0 \psi)_n = -i \sigma_x (\psi_{n+1} - \psi_n) 
	$,
	with
	 \(\sigma_x
\)  the first Pauli matrix.
	Explicitly, the Hamiltonian acts component-wise as
	
	\[
	(H_0 \psi)_n =
	- i 
	\begin{pmatrix}
		0 & 1 \\
		1 & 0
	\end{pmatrix}
	\left(
	\begin{pmatrix}
		\psi_{n+1, \uparrow} \\
		\psi_{n+1, \downarrow}
	\end{pmatrix}
	-
	\begin{pmatrix}
		\psi_{n, \uparrow} \\
		\psi_{n, \downarrow}
	\end{pmatrix}
	\right)
	.
	\]
	It is not hard to see, that the spectrum of the operator $H$ is given as $\sigma(H_0) = \{ \pm i(e^{ik} -1) : k \in [0, 2 \pi]\}$, so that $H$ (while still bounded and normal) is no longer self adjoint. 
	
	The non-symmetric discretization thus explicitly breaks the Hermiticity of the Hamiltonian, which can be used to circumvent the constraints of the Nielsen– Ninomiya theorem \cite{nakamura2024remarks}: This theorem states that, under mild assumptions including Hermiticity, locality, and discrete translational invariance, lattice fermions must appear in pairs of opposite chirality --- an unwanted phenomenon known as \emph{Fermion doubling}. In practice, this leads to unwanted low-energy doubler modes in symmetric discretizations, complicating the recovery of the correct continuum limit.

	We may now add a potential term $\beta B = \beta V(n)  \mathbb{I}_2$ to the Hamiltonian; say a trapping potential that is zero on a finite number of lattice sites, but satisfies $|V(n)| \rightarrow \infty$ as $n \rightarrow \pm \infty$. Then  $B=V$ is  densely defined and closed on its maximal domain. By Theorem \ref{bounded_setting}, taking the limit $\beta \rightarrow \infty$ leads to an effective Hamiltonian $PH_0P$. 
	Since $V$ is self-adjoint, the Riesz projection $P$ here is simply given as the orthogonal projection onto the space of elements in $ \mathds{C}^2 \otimes \ell^2(\mathds{Z})$ that are supported on lattice sites $\{m \in \mathds{Z}: V(m) = 0\}$, so that $PH_0P$ is a exactly the restriction of $H_0$ to these sites.
\end{Ex}

\subsection{Norm Resolvent Convergence when $B$ is bounded}\ \\

Let us now consider the setting where  $B$ being bounded ensures  $A + \beta B$ is closed.
\subsubsection{Convergence if hermitianized anticommutators are bounded from below}\ \\
We begin our discussion without any assumptions on how 
$A$ interacts with kernel and range of $B$. Instead we note that since $\mathcal{B}(\mathcal{H})$ is Banach, a sufficient condition for generalized resolvent convergence is that the family $\{(A + \beta B)^{-1}\}_\beta$ forms a Cauchy net. As we show now, this can be ensured if the 'hermitianized' anti-commutators $(AB^* + BA^*)$ and $(BA^* + AB^*)$ are bounded from below:

\begin{Thm}\label{main_closed_theorem}
	Let \( A: \mathcal{D}(A) \rightarrow \mathcal{H} \) be a closed operator. Let \( B \in \mathcal{B}(\mathcal{H}) \) be bounded. Assume further that $0\in \sigma(B)$ is an isolated point of $\sigma(B)$ at which the quasi-nilpotent part of $B$ vanishes. Assume the Riesz projection $P_0$  (associated to $B$) onto $0\in \sigma(B)$ is an orthogonal projection.  Let $Q_0 = Id_{\mathcal{H}} - P_0$ be the corresponding complementary projection. Assume  $\exists \gamma \geq 0$ so that for all $ \phi \in \mathcal{D}(A)$ and $\psi \in \mathcal{D}(A^*)$ we have
	\begin{align}\label{forms_bounded+from_below}
		-\gamma \|Q\phi\|^2 \leq \langle A\phi, B\phi \rangle + \langle B\phi, A\phi \rangle,  \ \
		-\gamma \|Q\psi\|^2 \leq \langle A^*\psi, B^*\psi \rangle + \langle B^*\psi, A^*\psi \rangle.
	\end{align}
	Then there exists a closed  operator \( \underline{A} : \mathcal{D}(\underline{A}) \to P\mathcal{H} \) such that
	\[
	\left\|(A + \beta B - z)^{-1} - P (\underline{A} - z)^{-1} P \right\| = \mathcal{O}\left(1/\beta \right).
	\]
	If $\mathcal{D}(A) \cap P \mathcal{H}$ is dense in
	$P\mathcal{H}$, then $\underline{A}$ is densely defined on $P\mathcal{H}$ and a closed extension of $PA: \mathcal{D}(A)\cap P\mathcal{H} \rightarrow P \mathcal{H} $.
\end{Thm}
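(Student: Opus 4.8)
The plan is to mimic the structure of the proof of Theorem~\ref{bounded_setting}, replacing the relative-boundedness bookkeeping by the lower-bound hypothesis~\eqref{forms_bounded+from_below}, which plays the role of a "sectoriality along $Q$" condition. First I would record the basic algebraic identity coming from Lemma~\ref{conv_to_projection}: for $z\neq 0$ and $\beta\gg1$ the operator $\beta B-z$ is boundedly invertible and $-z(\beta B-z)^{-1}=P+R_\beta$ with $\|R_\beta\|=\mathcal{O}(1/\beta)$, where $R_\beta = -zQ(\beta QBQ-z)^{-1}Q$. As in Theorem~\ref{bounded_setting}, one then writes the candidate resolvent of $A+\beta B$ as
\begin{align}
(A+\beta B-z)^{-1} = \big[-z(\beta B-z)^{-1}\big]\,\big(A\,[-z(\beta B-z)^{-1}] - z\big)^{-1},
\end{align}
provided the inner operator $S_\beta := A(-z(\beta B-z)^{-1}) - z = A(P+R_\beta)-z$ is boundedly invertible. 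Note $AP$ is bounded: it equals $-zA(\beta B-z)^{-1}P$ and $A(\beta B-z)^{-1}$ is bounded because $(\beta B-z)^{-1}$ maps into $\mathcal{D}(B)\subseteq\mathcal{D}(A)$ with a graph-norm bound; hence $S_\beta = (AP-z) + AR_\beta$, and the correction $AR_\beta = -zA Q(\beta QBQ-z)^{-1}Q$ needs a quantitative bound.

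The heart of the argument is therefore to show $\|AR_\beta\|=\mathcal{O}(1/\beta)$, and this is where~\eqref{forms_bounded+from_below} enters. Writing $\phi = (\beta QBQ-z)^{-1}Q\psi$, so $\phi = Q\phi\in\mathcal{D}(A)$, the lower bound gives $2\,\mathrm{Re}\langle A\phi, B\phi\rangle \geq -\gamma\|\phi\|^2$; combining with $(\beta QBQ - z)\phi = Q\psi$ and taking real/imaginary parts of $\langle (A+\beta B - z)\phi,\ (\text{suitable test vector})\rangle$ should yield an a priori estimate of the form $\|A\phi\| \lesssim \|\phi\| + \|Q\psi\|$ together with $\|\phi\|\lesssim \|Q\psi\|/\beta$ from Lemma~\ref{conv_to_projection}. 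The adjoint half of~\eqref{forms_bounded+from_below} is used symmetrically to control $S_\beta^*$, i.e.\ to get invertibility of $S_\beta$ rather than merely a left inverse, since $A$ need not be self-adjoint. Once $\|AR_\beta\|=\mathcal{O}(1/\beta)$ is in hand, a Neumann series around $(AP-z)^{-1}$ — which is bounded for $|z|>\|AP\|$ — shows $S_\beta$ is invertible with $S_\beta^{-1} = (AP-z)^{-1} + \mathcal{O}(1/\beta)$, and then expanding the product and comparing with $P(AP-z)^{-1}P = P(PAP-z)^{-1}P$ (the identity $P(AP-z)^{-1}=P(PAP-z)^{-1}$, $P(AP-z)^{-1}Q=0$ being exactly as in Theorem~\ref{bounded_setting}) yields the $\mathcal{O}(1/\beta)$ resolvent estimate with $\underline{A}=PAP$ on $\mathcal{D}(\underline A)=\mathcal{D}(A)\cap P\mathcal{H}$, or its closure. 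I expect the main obstacle to be precisely extracting the a priori bound $\|AR_\beta\| = \mathcal{O}(1/\beta)$ from the form inequality: unlike the relative-boundedness case where $\|A R_\beta\|\le a\|BR_\beta\| + b\|R_\beta\|$ is immediate, here $A$ is controlled only in the weak (sesquilinear) sense and only modulo the $\|Q\cdot\|^2$ defect, so one must feed back the smallness of $R_\beta = QR_\beta Q$ and the factor $\beta$ carefully, likely by testing the form identity against $\phi$ itself and against $B\phi$ and iterating.

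Finally, for the last sentence: $\underline{A}$ as constructed is the operator whose resolvent is $P(AP-z)^{-1}P$; since $P(AP-z)^{-1}P = P(PAP-z)^{-1}P$ and $PAP$ acts on $\mathcal{D}(A)\cap P\mathcal{H}$, one reads off that $\underline A \supseteq PA|_{\mathcal{D}(A)\cap P\mathcal{H}}$ and is closed (being a resolvent of a bounded operator composed appropriately, or directly since $-z(\underline A-z)^{-1}$ is a bounded idempotent-range operator). If $\mathcal{D}(A)\cap P\mathcal{H}$ is dense in $P\mathcal{H}$, then $PA$ restricted there is densely defined, its closure $\overline{PA}$ exists, and one checks $(\overline{PA}-z)^{-1}$ agrees with the limiting resolvent on the dense set, hence everywhere; thus $\underline A$ is a closed extension of $PA$ and is densely defined on $P\mathcal{H}$. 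This part is routine once the norm resolvent estimate is established.
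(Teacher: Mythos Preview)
Your proposal has a genuine structural gap that prevents the Theorem~\ref{bounded_setting} template from working here. You claim that $AP$ is bounded because ``$(\beta B-z)^{-1}$ maps into $\mathcal{D}(B)\subseteq\mathcal{D}(A)$ with a graph-norm bound.'' But in the present theorem $B\in\mathcal{B}(\mathcal{H})$ is \emph{bounded}, so $\mathcal{D}(B)=\mathcal{H}$ and the inclusion $\mathcal{D}(B)\subseteq\mathcal{D}(A)$ is exactly what is \emph{not} assumed --- that was the relative-boundedness hypothesis of Theorem~\ref{bounded_setting}, which has been dropped. Consequently $A(\beta B-z)^{-1}$ is not an everywhere-defined bounded operator, $AP$ is in general unbounded (its natural domain is only $\mathcal{D}(A)\cap P\mathcal{H}$, which need not even be dense), and the Neumann series around $(AP-z)^{-1}$ makes no sense. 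The same issue kills the attempt to bound $\|AR_\beta\|$: the form hypothesis~\eqref{forms_bounded+from_below} only controls $2\mathrm{Re}\langle A\phi,B\phi\rangle$ from below, which cannot by itself produce an operator-norm bound on $A$ times anything.

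The paper's proof is genuinely different and sidesteps this obstruction. Instead of factoring through $(\beta B-z)^{-1}$ and inverting $S_\beta$, it shows directly that $\{(A+\beta B-z)^{-1}\}_\beta$ is a Cauchy net in $\mathcal{B}(\mathcal{H})$ via the second resolvent identity
\[
(A+\beta B-z)^{-1}-(A+\tilde\beta B-z)^{-1}=\frac{\tilde\beta-\beta}{\beta\tilde\beta}\,(B+\delta(A-z))^{-1}\,QBQ\,(B+\tilde\delta(A-z))^{-1},
\]
and the key analytic input (Lemma~\ref{bddness_lemma}) is a uniform-in-$\delta$ bound on $\|Q(B+\delta(A-z))^{-1}\|$ and $\|(B+\delta(A-z))^{-1}Q\|$, which is exactly where~\eqref{forms_bounded+from_below} and the orthogonality of $P$ enter. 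The limit $T_z$ is then identified not as $(PAP-z)^{-1}$ by construction, but abstractly: one checks $z\mapsto T_z$ is a pseudo-resolvent with trivial kernel on $P\mathcal{H}$ and invokes Yosida's theorem to obtain a closed generator $\underline{A}$. In particular $\underline{A}$ is \emph{not} asserted to equal $PAP$; only that it extends $PA$ on $\mathcal{D}(A)\cap P\mathcal{H}$ when the latter is dense. This abstract route is forced precisely because $AP$ may be unbounded and $PA$ need not be closed or have nonempty resolvent set a priori.
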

Before conducting the proof, let us first separate out a discussion of the implications of the conditions in Theorem \ref{main_closed_theorem}. We begin with discussing the implications of assuming that the Riesz projector is orthogonal:

\begin{Lemma}\label{Riesz_proj_proof}
	If the Riesz projector $P$  is orthogonal, there is a constant $C > 0$ such that   $|(\eta,  B \eta)| \leq C \|B\eta\|^2$ for all $ \eta \in \mathcal{H}$.
\end{Lemma}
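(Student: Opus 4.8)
The plan is to exploit the block structure of $B$ relative to the Riesz projection together with the orthogonality hypothesis on $P$. First I would record the two structural facts that are already available from the discussion preceding the statement: since $B$ commutes with $P$ and the quasi-nilpotent part of $B$ at $0$ vanishes, one has $PB = BP = PBP = 0$, hence $B = (P+Q)B(P+Q) = QBQ$ with $Q := \Id_{\mathcal H} - P$; and since $0$ is isolated in $\sigma(B)$ and $P$ is precisely the Riesz projector onto it, the restriction $QBQ|_{Q\mathcal H}$ is boundedly invertible on $Q\mathcal H$ (its spectrum equals $\sigma(B)\setminus\{0\}$, which is bounded away from $0$; this is the same invertibility used in the proof of Lemma \ref{conv_to_projection}). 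Setting $C := \bigl\|(QBQ|_{Q\mathcal H})^{-1}\bigr\|$, it follows that $\|Q\eta\| \le C\,\|QBQ\eta\| = C\,\|B\eta\|$ for every $\eta \in \mathcal H$, using $QBQ(Q\eta) = QBQ\eta = B\eta$.

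Next, fix $\eta \in \mathcal H$ and decompose $\eta = P\eta + Q\eta$. Because $B = QBQ$, we have $B\eta = QBQ\eta \in Q\mathcal H$. This is the step where orthogonality of $P$ (equivalently of $Q$) is genuinely used: the decomposition $\mathcal H = P\mathcal H \oplus Q\mathcal H$ is then orthogonal, so $(P\eta, B\eta) = 0$, and therefore $(\eta, B\eta) = (Q\eta, B\eta)$. Applying Cauchy--Schwarz and then the bound from the first paragraph, $|(\eta, B\eta)| \le \|Q\eta\|\,\|B\eta\| \le C\,\|B\eta\|^2$, which is exactly the asserted inequality.

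I do not expect a genuine obstacle; the argument is short, and the only point deserving care is the justification that $QBQ|_{Q\mathcal H}$ is boundedly invertible, which is immediate from the Riesz-decomposition theory already invoked (the range of $P$ is the generalized eigenspace at $0$, which under the vanishing of the quasi-nilpotent part coincides with $\ker B$, while $B$ on the complementary spectral subspace has spectrum disjoint from $\{0\}$). It is worth emphasizing in the writeup that orthogonality of $P$ is indispensable here: without it, the cross term $(P\eta, B\eta)$ need not vanish and the estimate genuinely fails, consistent with the hypotheses imposed in Theorem \ref{main_closed_theorem}.
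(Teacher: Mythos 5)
Your proof is correct and takes essentially the same route as the paper's: both use orthogonality of $P$ together with $B=QBQ$ (from the vanishing of the quasi-nilpotent part) to reduce $(\eta, B\eta)$ to the $Q\mathcal H$ block, and both invoke bounded invertibility of the off-kernel spectral block of $B$ on $Q\mathcal H$. The only minor difference is that you work directly with $(QBQ|_{Q\mathcal H})^{-1}$ and Cauchy--Schwarz, whereas the paper routes through $QBBQ$ and the operator norm $\|B\|$; your version is a touch cleaner and yields the tidier constant $C=\|(QBQ|_{Q\mathcal H})^{-1}\|$.
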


\begin{proof}
	Clearly if $P$ is orthogonal, also $Q = I - P$ is orthognal. 
	Thus we have $|(\eta,  B \eta)| = |(Q\eta,  B Q\eta)| \leq \|B\| \|Q \eta\|^2$. It is hence sufficient to establishe the existence of a constant $\tilde{C}$ so that $\|Q \eta\|^2 \leq \tilde{C} \|B \eta\|^2$. Since   $0 \notin \sigma(Q BB Q)$ (see e.g. \cite{HislopSigal1996}),  $Q BB Q: Q \mathcal{H} \rightarrow  Q \mathcal{H}$ is boundedly invertible. For any $y \in \mathcal{H}$ we thus have
	$
	|\langle [Q BB Q]^{-1} Qy ,[Q BB Q]^{-1} Qy  \rangle| \leq 
	\|[Q BB Q]^{-1}\|^2 \cdot \|Q y\|.
	$
	Setting $\eta = By$ shows we may set $C = \|B\|^2 \cdot \|[Q BB Q]^{-1}\| $.
\end{proof}

Next we discuss the implications of the assumption on lower boundedness in (\ref{forms_bounded+from_below}):

\begin{Lem}\label{bddness_lemma}
	The lower-boundedness of the forms in (\ref{forms_bounded+from_below}) together with the assumption $P = P^* $ implies the exitence of
	a constant 
	$C_z \geq 0$ so that $\|Q( B + \delta (A-z))^{-1}\|$, $\|(B + \delta (A-z))^{-1}Q\| \leq C_z$ uniformly as $\delta \to 0$.
\end{Lem}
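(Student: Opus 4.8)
The plan is to reduce the whole statement to a single \emph{a priori lower bound} for $T_\delta := B + \delta(A - z)$ and then extract everything from one scalar identity. Note that $\mathcal{D}(T_\delta) = \mathcal{D}(A)$ for $\delta \neq 0$ and $A + \beta B - z = \beta\,T_{1/\beta}$, so the lemma is the statement that the $Q$-compressions of $T_\delta^{-1}$ stay bounded as $\delta \to 0$. What I would actually prove is: there are $\delta_0 > 0$ and $C_z \geq 0$ with
\[
\|Q\phi\| \ \leq\ C_z\,\|T_\delta\phi\| \qquad \text{for all } \phi \in \mathcal{D}(A) \text{ and all } 0 < \delta < \delta_0 .
\]
On any such $\delta$ for which $T_\delta$ is boundedly invertible (which is where the lemma is applied inside the proof of Theorem~\ref{main_closed_theorem}) this reads $\|Q T_\delta^{-1}\| \leq C_z$; the companion bound $\|T_\delta^{-1}Q\| \leq C_z$ will then follow by re-running the argument for the adjoint pair $(A^*,B^*)$.

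Before the main estimate I would record two consequences of the structural hypotheses on $B$. First, since $B$ commutes with its Riesz projection and the quasi-nilpotent part vanishes ($PBP = 0$), we get $B = PBP + QBQ = QBQ$; in particular $\langle\phi, B\phi\rangle = \langle Q\phi, B\,Q\phi\rangle$, so $|\langle\phi, B\phi\rangle| \leq \|B\|\,\|Q\phi\|^2$, and also $B\phi = QBQ(Q\phi)$. Second, because $0 \in \sigma(B)$ is isolated and $P$ is the Riesz projection at $0$, the part $QBQ|_{Q\mathcal{H}}$ is boundedly invertible (standard Riesz decomposition theory, cf. the argument in Lemma~\ref{Riesz_proj_proof}), so with $c_0 := \|(QBQ|_{Q\mathcal{H}})^{-1}\|^{-1} > 0$ we have the coercivity $\|B\phi\| = \|QBQ(Q\phi)\| \geq c_0\,\|Q\phi\|$.

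The core step is to expand $\langle T_\delta\phi, B\phi\rangle$ \emph{without ever splitting $\phi$} — this is deliberate, since in the bounded-$B$ regime $P$ need not map $\mathcal{D}(A)$ into itself, so $P\phi, Q\phi$ may fail to lie in $\mathcal{D}(A)$. Using that $\delta$ is real, a direct computation gives
\[
2\,\Re\langle T_\delta\phi, B\phi\rangle \ =\ 2\|B\phi\|^2 + \delta\big(\langle A\phi, B\phi\rangle + \langle B\phi, A\phi\rangle\big) - 2\delta\,\Re\!\big(\bar z\,\langle\phi, B\phi\rangle\big).
\]
Now I would bound the middle term below by the form inequality (\ref{forms_bounded+from_below}) and the last term by $|\langle\phi,B\phi\rangle| \leq \|B\|\|Q\phi\|^2$, to obtain $2\,\Re\langle T_\delta\phi, B\phi\rangle \geq 2\|B\phi\|^2 - \delta(\gamma + 2|z|\,\|B\|)\|Q\phi\|^2$, while Cauchy--Schwarz and Young give the crude upper bound $2\,\Re\langle T_\delta\phi, B\phi\rangle \leq \|T_\delta\phi\|^2 + \|B\phi\|^2$. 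Combining, cancelling one $\|B\phi\|^2$, and inserting the coercivity $\|B\phi\|^2 \geq c_0^2\|Q\phi\|^2$ leaves
\[
\|T_\delta\phi\|^2 \ \geq\ \big(c_0^2 - \delta(\gamma + 2|z|\,\|B\|)\big)\|Q\phi\|^2 ,
\]
so with $\delta_0 := c_0^2\big/\big(2(\gamma + 2|z|\,\|B\|)\big)$ we get $\|Q\phi\| \leq \sqrt{2}\,c_0^{-1}\|T_\delta\phi\|$ uniformly for $0 < \delta < \delta_0$; thus $C_z := \sqrt{2}\,c_0^{-1}$ works.

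Finally, for $\|T_\delta^{-1}Q\|$ I would transpose the argument. Because $P = P^*$, the projection $P$ is simultaneously the Riesz projection of $B^*$ at $0$ with complement $Q = Q^*$, and $\|B^*\| = \|B\|$, $\|(QB^*Q|_{Q\mathcal{H}})^{-1}\| = \|(QBQ|_{Q\mathcal{H}})^{-1}\|$, so both structural facts hold verbatim for $B^*$. The second inequality in (\ref{forms_bounded+from_below}) is precisely the lower-boundedness hypothesis for $(A^*, B^*)$, and $T_\delta^* = B^* + \delta(A^* - \bar z)$ since $\delta$ is real, so the computation above applied to $T_\delta^*$ yields $\|Q(T_\delta^*)^{-1}\| \leq C_z$; taking adjoints and using $(T_\delta^{-1}Q)^* = Q(T_\delta^*)^{-1}$ gives $\|T_\delta^{-1}Q\| \leq C_z$. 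I expect the only genuinely delicate point to be the one flagged above — that $P$ does not preserve $\mathcal{D}(A)$ — which is exactly why the proof is arranged so that $A$ is only ever applied to the original $\phi \in \mathcal{D}(A)$ inside the single bilinear quantity $\langle T_\delta\phi, B\phi\rangle$; controlling the resulting cross terms uniformly in $\delta \to 0$ is the crux.
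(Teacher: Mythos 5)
Your proof is correct and essentially follows the same route as the paper: both establish the a priori lower bound $\|Q\phi\| \leq C_z\|T_\delta\phi\|$ on $\mathcal{D}(A)$ by lower-bounding the $A$--$B$ cross term via (\ref{forms_bounded+from_below}), controlling the $z$ cross term via orthogonality of $P$ (so $\langle\phi, B\phi\rangle = \langle Q\phi, BQ\phi\rangle$), and invoking coercivity of $QBQ$ on $Q\mathcal{H}$, then transposing to $(A^*,B^*)$ for the second bound. The only cosmetic difference is that you package the estimate through the single scalar identity $2\Re\langle T_\delta\phi, B\phi\rangle = \|T_\delta\phi\|^2 + \|B\phi\|^2 - \delta^2\|(A-z)\phi\|^2$ with Young's inequality, whereas the paper expands $\|T_\delta\phi\|^2$ directly and splits into the three sub-inequalities (\ref{ineq:a})--(\ref{ineq:c}); the resulting bound $\|T_\delta\phi\|^2 \geq (c_0^2 - \delta\,\mathrm{const})\|Q\phi\|^2$ is identical.
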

\begin{proof}
	We begin by establishing the existence of $C_z$ so that  $\|Q( B + \delta (A-z))^{-1}\| \leq C_z$. This condition is equivalent to 
	$
	\left\| Q (B + \tilde{\delta} (A - z))^{-1} \psi \right\|^2 \leq \tilde{C} \|\psi\|^2$,  $\forall \psi \in \mathcal{H}
	$.
	This in turn is true  if
	$
	\|Q \eta\|^2 \leq \tilde{C} \| B \eta + \tilde{\delta} (A - z) \eta \|^2$,  $\forall \eta \in \mathcal{D}(A).
	$
	Expanding  yields
	\[
	\|Q \eta\|^2 \leq \tilde{C} \left[ \tilde{\delta}^2 \|(A - z) \eta\|^2 + \|B \eta\|^2 + \tilde{\delta} \left( \langle (A - z) \eta, B \eta \rangle + \langle B \eta, (A - z) \eta \rangle \right) \right].
	\]
	Since $\tilde{\delta}^2 \|(A - z) \eta\|^2 $ is non-negative, the above inequality is certainly true, if we know that the following tightened inequality holds:
	\[
	\|Q \eta\|^2 \leq \tilde{C} \left[  \|B \eta\|^2 + \delta \left( \langle A  \eta, B \eta \rangle + \langle B \eta, A  \eta \rangle \right)
	-
	\delta \left( \langle z  \eta, B \eta \rangle + \langle B \eta, z  \eta \rangle \right) 
	\right].
	\]
	
	This -- in turn -- is true if the following three inequalities hold uniformly as $\delta \to 0$:
	\begin{subequations}\label{ineq:main}
		\begin{align}
			\|Q \eta\|^2 &\leq \frac{C_z}{3} \|B\eta\|^2 \label{ineq:a} \\
			0 &\leq  \frac{\|B\eta\|^2}{3} + \delta \left( \langle A  \eta, B \eta \rangle + \langle B \eta, A  \eta \rangle \right) \label{ineq:b} \\
			0 &\leq  \frac{\|B\eta\|^2}{3}  -
			\delta \left( \langle z  \eta, B \eta \rangle + \langle B \eta, z  \eta \rangle \right) \label{ineq:c}
		\end{align}
	\end{subequations} 
	By the proof of Lemma \ref{Riesz_proj_proof}, we know that (\ref{ineq:a}) holds if $C_z \geq 3 \|[Q B Q]^{-1}\|^2$. Again by Lemma \ref{Riesz_proj_proof}, we know that (\ref{ineq:c}) is true for $C_z \geq  3 |z|\|B\|^2 \|[Q B Q]^{-1}\|$. Finally, we note that with our assumption 
	\begin{align}
		&\frac{\|B\eta\|^2}{3} + \delta \left( \langle A  \eta, B \eta \rangle + \langle B \eta, A  \eta \rangle \right)
		\geq   \frac{\|B\eta\|^2}{3}  - \delta \gamma \|Q \eta\| ^2\\
		\geq&  \left( \frac{1}{3}- \delta \gamma   \|[Q B Q]^{-1}\|^2\right)\|B\eta\| ^2,
	\end{align}
	which is eventually non-negative uniformly in $\eta \in \mathcal{H}$ and $\delta \to 0$, so that (\ref{ineq:b}) holds as well. Hence we may choose $C_z = 3\cdot  \max\left\{1, |z|\|B\|^2\right\}\cdot  \|[Q B Q]^{-1}\|^2$.
	
	To establish the existence of $C_z \geq 0$ so that  $\|(B + \delta (A-z))^{-1}Q\| \leq C_z$ uniformly, we note  $\|(B + \delta (A-z))^{-1}Q\| = \|[Q]^* \cdot [(B + \delta (A-z))^{-1}]^*\| = \|Q\cdot [(B^* + \delta (A^*-\overline{z}))^{-1}]| $.
	From here we may proceed in complete analogy to above, using the remaining condition in (\ref{forms_bounded+from_below}).
	
\end{proof}

Equipped with the above two Lemmata, we are now ready prove Theorem  \ref{main_closed_theorem}:
\begin{proof}[Proof of Theorem \ref{main_closed_theorem}]
	We begin by establishing that \(\left\{ (A + \beta B - z)^{-1} \right\}_{\beta \geq 0}\) is a Cauchy net. Indeed, by the second resolvent formula, we have
	\begin{align}
		&\left\| (A + \beta B - z)^{-1} - (A + \tilde{\beta} B - z)^{-1} \right\| = |\beta - \tilde{\beta}| \cdot \left\| (A + \beta B - z)^{-1} B (A + \tilde{\beta} B - z)^{-1} \right\|\\
		=& \frac{|\beta - \tilde{\beta}|}{\beta \tilde{\beta}} \cdot \left\| (B + \delta (A - z))^{-1} Q B Q (B + \tilde{\delta} (A - z))^{-1} \right\|,
	\end{align}
	where we have set \(\delta = \beta^{-1}\), \(\tilde{\delta} = \tilde{\beta}^{-1}\). By Lemma \ref{bddness_lemma}, there is a $C \geq 0$ so that  
	\begin{align}
		&\left\| (B + \delta (A - z))^{-1} Q B Q (B + \tilde{\delta} (A - z))^{-1} \right\|\\
		\leq &\left\| (B + \delta (A - z))^{-1} Q \right\| \cdot \|B\| \cdot \left\| Q (B + \tilde{\delta} (A - z))^{-1} \right\| \leq C^2 \|B\|.
	\end{align}
	
	Hence the net \(\left\{ (A + \beta B - z)^{-1}\right\}_\beta\) is indeed Cauchy, and
	since \(\mathcal{B}(\mathcal{H})\) is Banach, there thus exists \(T_z \in \mathcal{B}(\mathcal{H})\) such that
	$
	\left\| (A + \beta B - z)^{-1} - T_z \right\| \to 0.
	$.
	W.l.o.g. take now \(\tilde{\beta} \geq \beta\). Then
	$
	\| (A + \beta B - z)^{-1} - (A + \tilde{\beta} B - z)^{-1} \| \leq C^2 \|B\|/\beta.
	$
	Next we note
	\begin{align}
		\left\| (A + \beta B - z)^{-1} - T_z \right\| 
		\leq& \left\| (A + \beta B - z)^{-1} - (A + \tilde{\beta} B - z)^{-1} \right\| \\
		+& \left\| (A + \tilde{\beta} B - z)^{-1} - T_z \right\|\\
		\leq& 2 \tilde{C} \|B\|/\beta,
	\end{align}
	with the last inequality holding because the second term on the previous line can be made arbitrarily small (as \(\tilde{\beta} \to \infty\)).
	Next we note that the mapping \(z \mapsto T_z\) inherits the first resolvent formula. The first resolvent formula for \((A + \beta B - z)^{-1}\) yields
	\[
	(A + \beta B - z)^{-1} - (A + \beta B - y)^{-1} = (z - y) (A + \beta B - z)^{-1} (A + \beta B - y)^{-1}.
	\]
	
	The claim then follows since the left side converges in norm to \(T_z - T_y\), while the right side converges to \((z - y) T_z T_y\).
	Indeed, for any two nets of bounded operators \(A_\beta \to A\), 
	\(B_\beta \to B\),
	$
	\| A_\beta B_\beta - A B \| \leq \|A\| \| B_\beta - B \| + \| B_\beta \| \| A_\beta - A \|.
	$
	
	Next we establish, that \(T_z : P \mathcal{H} \to P \mathcal{H}\) is injective: Suppose \(T_z P \psi = 0\). Set \(\eta_\beta := (A + \beta B - z)^{-1} P \psi\).  Then \(T_z P \psi = \lim_{\beta \to \infty} \eta_\beta\).

	Clearly, \( P(A - z)\eta_\beta = P(A + \beta B - z) \eta_\beta = P \psi \). Since \( PA \eta_\beta = P \psi + z \eta_\beta \) and \( \eta_\beta \rightarrow 0 \), the sequence \( \{PA \eta_\beta\}_\beta \) converges to \( P \psi \). However, since \( \eta_\beta \rightarrow 0 \) and \( PA \) is closed, we have for the limit \( P \psi \) of \( PA \eta_\beta \) that
	$
	P \psi = 0.
	$
	This establishes injectivity of $T_z$.

	By \cite[Chapter VIII.4, Theorem 1]{yosida1968functional}, since \(z \mapsto T_z\) is a pseudo-resolvent with dense range  on the Hilbert space $\overline{\text{Ran}(T_z)}$ 
	and \(\ker(T_z) = \{0\}\), \(T_z\) is generated by a closed, densely defined operator \(\underline{A}\) on $\overline{\text{Ran}(T_z)}$.
	
	Suppose now \(\mathcal{D}(A) \cap P \mathcal{H}\) is dense in \(P \mathcal{H}\). For \(\phi \in \mathcal{D}(A) \cap P \mathcal{H}\),
	\[
	(A + \beta B - z)^{-1} (A - z) \phi = (A + \beta B - z)^{-1} (A + \beta B - z) \phi = \phi,
	\]
	so \(\phi \in \operatorname{Ran}(T_z) = \mathcal{D}(\underline{A})\). Hence \(\underline{A}\) is densely defined ($\overline{\mathcal{D}(\underline{A})} \equiv \overline{ \operatorname{Ran}(T_z)} = P\mathcal{H}$).
\end{proof}

From the proof may also understand the necessity of the condition that the Riesz projector $P$ be orthogonal: By Lemma \ref{Riesz_proj_proof} it ensures that  we may bound the quadratic form associated to $B$ by the one associated to $B^*B$, which via  (\ref{ineq:c}) then allows us to conclude that the family $\{(A + \beta B)^{-1}\}_\beta$ is Cauchy. We may drop the assumption that $P^* = P$, if instead we make further assumptions on how the generically unbounded operator $A$ behaves on $\ker(B)$ and $\text{Ran}(B)$, as we discuss in the subsequent Section \ref{nice_interaction}. Here we however first provide an example illustrating Theorem \ref{main_closed_theorem}:

\begin{Ex}\label{high_energy_example_II} Let us revisit the Dirac Hamiltonian setting of Example \ref{weak_interaction_example}, now with gauge background completely switched off. The free Hamiltonian is then given as 
\begin{align}
	A = \begin{pmatrix}
	- i \partial_x  \mathbb{I}_2 & m \mathbb{I}_2 \\
	m \mathbb{I}_2 & i \partial_x  \mathbb{I}_2
\end{pmatrix}.
\end{align}	
It acts on $\Psi = (\psi_L,\psi_R)$, with $
\psi_L := (\psi_L^{(1)} \ \psi_L^{(2)} )^\top $ and $
\psi_R :=( \psi_R^{(1)} \ \psi_R^{(2)})^\top$.
Changing our representation $\Psi \rightarrow \Phi$ from making hermiticity apparent ($\Psi$) to making Fermion-type apparent ($\Phi$), with $\Phi = (\psi^{(1)}, \psi^{(2)})$ and $\phi^{(a)} = (\psi^{(a)}_L, \psi^{(a)}_R)$, our base Hamiltonian may equivalently be represented as 
\begin{align}
	A = \begin{pmatrix}
		- i \sigma_x \partial_x  + m \sigma_z & 0 \\
		0 &- i \sigma_x \partial_x + m \sigma_z
	\end{pmatrix}.
\end{align}	
Consider now a perturbation of the form $k^{(1)}_y \cdot B$, with
\begin{align}
	B = \begin{pmatrix}
		 \sigma_y & 0 \\
		0 &0
	\end{pmatrix}.
\end{align}	
The corresponding operators $A_{k^{(1)}_y} = A + k^{(1)}_y B$ can be thought of as the effective one-dimensional Hamiltonian arising from an original two-dimensional Dirac Hamiltonian at fixed momenta in $y$-direction (c.f. Appendix \ref{fixed_momentum_de}): For the first Fermion type $\psi^{(1)}$ we consider the momentum $p_y$ in $y$-direction to be given as $k^{(1)}_y$. For the second fermion type $\psi^{(2)}$, we fix $p_y = 0$. \emph{Strong} resolvent convergence as the momentum $k^{(1)}_y $ is taken to infinity is then already guaranteed by Theorem \ref{convergence_to_compression} of Section \ref{self_adjoint_section}. Theorem \ref{main_closed_theorem} above now promotes this to \emph{norm}-resolvent convergence of the Hamiltonian $A + k^{(1)}_y B$ towards the effective Hamiltonian $\underline{A} = - i \sigma_x \partial_x + m \sigma_z$ acting purely on $\psi^{(2)}$ as $\beta = k^{(1)}_y \rightarrow \infty$. Indeed, since $B$ is an orthogonal projection in and of itself, we only need to check the lower-boundedness of the forms in (\ref{forms_bounded+from_below}). However, since Pauli matrices are Hermitian and anti-commute (and thus $[\sigma_y \sigma_i + \sigma_i \sigma_y] = 0$ for $i = x,z$), these forms vanish identically and are thus trivially bounded from below. Hence Theorem \ref{main_closed_theorem} applies and establishes convergence in norm.
\end{Ex}

\subsubsection{Convergence if $A$ is bounded outside of  $\ker(B)$}\label{nice_interaction}\ \\

\noindent In the previous subsection, we had to restrict ourselves to the setting where the Riesz projector is orthogonal, and the quadratic forms of (\ref{forms_bounded+from_below}) are bounded from below.
If we know more about the behaviour of $A$ on range and kernel of  $B$, stronger results can be established. Here we will assume that the compressed operator $PA: \mathcal{D}(A) \cap P\mathcal{H} \rightarrow P\mathcal{H}$ is denely defined on $P\mathcal{H}$ as well as closed. The closedness of $A$ implies that $\mathcal{D}(A)$ equipped with the graph norm is a Hilbert space in which $\mathcal{D} := \mathcal{D}(A) \cap P\mathcal{H} $ is a closed subspace. We denote the orthogonal complement of $\mathcal{D}$ by $\mathcal{D}^\intercal$, so that $ \mathcal{D}(A) = \mathcal{D} \oplus   \mathcal{D}^\intercal$. Below we will also make use of the closure  of this orthogonal subspace in the original Hilbert space ($\overline{\mathcal{D}^\intercal}^{\|\cdot\|_{\mathcal{H}}}$).
With these preparations we are now ready to state our result:

\begin{Thm}\label{thm_bounded_on_range} 
	Let $B \in \mathcal{B}(\mathcal{H})$ have zero as an isolated eigenvalue at which the corresponding nilpotent part vanishes. Let $P$ be the Riesz projector associated to $0 \in \sigma(B)$ and let $Q = Id - P$ be its complementary projection. 	
	Let $A: \mathcal{D}(A) \rightarrow \mathcal{H}$ be a closed operator whose compression $PA: \mathcal{D} \rightarrow P\mathcal{H}$ is also closed, so that there exists  $z \in \rho(A) \cap \rho(PA)$. Futher assume that $QA$ and $A\restriction_{\mathcal{D}^\intercal}$ extend to bounded operators on the closure of their domains. Finally assume $Q: \overline{\mathcal{D}^\intercal}^{\|\cdot\|_{\mathcal{H}}} \rightarrow Q\mathcal{H}$ has a bounded inverse.
	Then for sufficiently large $\beta \gg 1$ the operator $(A + \beta B - z)$ is boundedly invertible and we have the generalized norm resolvent convergence
	\begin{align}\label{convergence_preserved_domain}
		\|(A + \beta B - z)^{-1} - P(PA -z)^{-1}P\| \lesssim 1/\beta \xrightarrow{\beta \rightarrow \infty} 0.
	\end{align}
 \end{Thm}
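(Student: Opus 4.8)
The plan is to diagonalize the family $A+\beta B-z$ into a $2\times 2$ block system adapted to the two decompositions $\mathcal{D}(A)=\mathcal{D}\oplus\mathcal{D}^\intercal$ (orthogonal in the graph norm) and $\mathcal{H}=P\mathcal{H}\oplus Q\mathcal{H}$, and then to invert it by Gaussian elimination (a Schur-complement argument), exploiting that the lower-right block is of order $\beta$ and hence dominates for $\beta\gg1$.

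First I would record the structural reductions. Since $B$ is bounded with $0\in\sigma(B)$ isolated and quasi-nilpotent part vanishing, $P$ commutes with $B$ and $PB=BP=0$, so $B=QBQ$ and $QBQ\colon Q\mathcal{H}\to Q\mathcal{H}$ is boundedly invertible (as $0\notin\sigma(QBQ)$, cf.\ \cite{HislopSigal1996}). Because $A\restriction_{\mathcal{D}^\intercal}$ extends to a bounded operator, the graph norm and the $\mathcal{H}$-norm are equivalent on $\mathcal{D}^\intercal$; hence $\mathcal{D}^\intercal$ is already closed in $\mathcal{H}$, and together with the hypothesis that $Q\restriction_{\overline{\mathcal{D}^\intercal}}$ is boundedly invertible onto $Q\mathcal{H}$, the map $S:=\big(Q\restriction_{\mathcal{D}^\intercal}\big)^{-1}\colon Q\mathcal{H}\to\mathcal{D}^\intercal$ is a bounded isomorphism. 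Writing $\widetilde{A}$ for the bounded extension of $A$ on $\mathcal{D}^\intercal$, I note that $QA$ is $\mathcal{H}$-bounded on $\mathcal{D}(A)$, so $M_{21}:=QA\restriction_{\mathcal{D}}$ extends by density of $\mathcal{D}$ in $P\mathcal{H}$ to $M_{21}\in\mathcal{B}(P\mathcal{H},Q\mathcal{H})$, and likewise $M_{12}:=P(\widetilde{A}-z)S\in\mathcal{B}(Q\mathcal{H},P\mathcal{H})$ and $R_z:=QAS-z\in\mathcal{B}(Q\mathcal{H})$.

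Next, parametrizing $\psi\in\mathcal{D}(A)$ as $\psi=\psi_1+Sw$ with $\psi_1\in\mathcal{D}$ and $w=Q\psi\in Q\mathcal{H}$, a direct computation using $PB=BP=0$ and $QSw=w$ shows that $(A+\beta B-z)\psi=f$ is equivalent to the system
\begin{align*}
(PA-z)\psi_1+M_{12}w&=Pf,\\
M_{21}\psi_1+\big(R_z+\beta QBQ\big)w&=Qf,
\end{align*}
where $PA-z\colon\mathcal{D}\to P\mathcal{H}$ is boundedly invertible since $z\in\rho(PA)$. For $\beta\gg1$, $R_z+\beta QBQ=\beta QBQ\big(\mathrm{Id}+\tfrac1\beta(QBQ)^{-1}R_z\big)$ is invertible with $\big\|(R_z+\beta QBQ)^{-1}\big\|\le 2\|(QBQ)^{-1}\|/\beta$ by a Neumann series, exactly as in Lemma \ref{conv_to_projection}. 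Eliminating $w=(R_z+\beta QBQ)^{-1}(Qf-M_{21}\psi_1)$ leaves $\mathcal{S}_\beta\psi_1=Pf-M_{12}(R_z+\beta QBQ)^{-1}Qf$ with Schur complement $\mathcal{S}_\beta:=(PA-z)-M_{12}(R_z+\beta QBQ)^{-1}M_{21}$. The correction term lies in $\mathcal{B}(P\mathcal{H})$ with norm $\lesssim 1/\beta$, so $\mathcal{S}_\beta=(PA-z)\big(\mathrm{Id}-(PA-z)^{-1}M_{12}(R_z+\beta QBQ)^{-1}M_{21}\big)$ is boundedly invertible for large $\beta$, with $\|\mathcal{S}_\beta^{-1}\|$ uniformly bounded and $\|\mathcal{S}_\beta^{-1}-(PA-z)^{-1}\|\lesssim 1/\beta$. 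This shows $z\in\rho(A+\beta B)$, and back-substitution together with transferring through the bounded isomorphisms $(\psi_1,w)\mapsto\psi_1+Sw$ and $f\mapsto(Pf,Qf)$ yields $\psi_1\to(PA-z)^{-1}Pf$ and $w\to 0$, each at rate $1/\beta$; since $(PA-z)^{-1}$ maps into $\mathcal{D}\subseteq P\mathcal{H}$, this is precisely $\|(A+\beta B-z)^{-1}-P(PA-z)^{-1}P\|\lesssim 1/\beta$, which is \eqref{convergence_preserved_domain}.

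The main obstacle is the bookkeeping forced by $PA-z$ being unbounded: one must verify that the Schur-complement correction $M_{12}(R_z+\beta QBQ)^{-1}M_{21}$ genuinely defines an element of $\mathcal{B}(P\mathcal{H})$ — this is where density of $\mathcal{D}$ in $P\mathcal{H}$ and $\mathcal{H}$-boundedness of $QA$ on all of $\mathcal{D}(A)$ (not merely on $\mathcal{D}^\intercal$) enter — and then run the Neumann series for $\mathcal{S}_\beta$ in the graph norm of $\mathcal{D}$ rather than in $\mathcal{H}$. The only other delicate point is checking that the three hypotheses on $\mathcal{D}^\intercal$ (boundedness of $A\restriction_{\mathcal{D}^\intercal}$, boundedness of $QA$, and bounded invertibility of $Q\restriction_{\overline{\mathcal{D}^\intercal}}$) combine to make $S$ a true bounded isomorphism $Q\mathcal{H}\to\mathcal{D}^\intercal$; once this is established the block structure, and hence the rest of the argument, is routine.
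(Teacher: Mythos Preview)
Your proposal is correct and follows essentially the same route as the paper: both set up the $2\times 2$ block decomposition of $A+\beta B-z$ adapted to $\mathcal{D}(A)=\mathcal{D}\oplus\mathcal{D}^\intercal$ on the domain side and $\mathcal{H}=P\mathcal{H}\oplus Q\mathcal{H}$ on the target side, and then invert by a Schur-complement/Neumann argument using that the $(2,2)$ block is dominated by $\beta QBQ$. The only cosmetic difference is the choice of pivot --- the paper eliminates the $\mathcal{D}$-component first (so its Schur complement is $A_{22}-A_{21}A_{11}^{-1}A_{12}\approx\beta QBQ$ with $\|S^{-1}\|\lesssim 1/\beta$), whereas you eliminate $w$ first (so your Schur complement $\mathcal{S}_\beta$ is an $O(1/\beta)$ perturbation of $PA-z$) --- and your observation that $\overline{\mathcal{D}^\intercal}^{\|\cdot\|_{\mathcal{H}}}=\mathcal{D}^\intercal$ slightly streamlines the bookkeeping.
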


\begin{proof}
	We note that we may view  $A_\beta = A + \beta B -z$ as a bounded operator between the Hilbert spaces $D \oplus D^\intercal \rightarrow P\mathcal{H} +Q \mathcal{H}$, with the latter sum not necessarily being orthogonal. This induced a block decomposition of $A_\beta -z$ as 
	\begin{align}\label{block_operator_I}
	(A_\beta  - z)
		=
		\begin{pmatrix}
			[PA - zP]\restriction_{\mathcal{D}} & [PA - zP]\restriction_{\mathcal{D}^\intercal}   \\
			QA\restriction_{\mathcal{D}}    & [QA + \beta QB - zQ]\restriction_{\mathcal{D}^\intercal}
		\end{pmatrix}.
	\end{align}
	By assumption, the off-diagonal entries constitute bounded operators. 
We now use a Schur complement approach to show that the operator in (\ref{block_operator_I}) is boundedly invertible. To build  a candidate inverse $T$ let first define 	$ 	S := [QA + \beta QB - zQ - QAP R PA]\restriction_{\mathcal{D}^\intercal}  \in \mathcal{B}(\overline{\mathcal{D}^\intercal}^{\| \cdot \|_{\mathcal{H}}}, Q  \mathcal{H}),
$
and 	$
R := (PA\restriction_{\mathcal{D}} - zP)^{-1} \in \mathcal{B}(P  \mathcal{H}, \mathcal{D})$. 
 Since $z \notin \sigma(PAP)$ by assumption, we know that $R$ is well-defined. We then define the candidate inverse as:
\begin{align}\label{def_block_inverse}
	T =
	\begin{bmatrix}
		R + R P[A\restriction_{\mathcal{D}^{\intercal}}] S^{-1} Q[A\restriction_{\mathcal{D}}]P R & -R P[A\restriction_{\mathcal{D}^{\intercal}}]S^{-1} \\
		- S^{-1} Q[A\restriction_{\mathcal{D}}] R & S^{-1}
	\end{bmatrix}.
\end{align}
To show that $S: \overline{\mathcal{D}^\intercal}^{\|\cdot\|_{\mathcal{H}}} \rightarrow Q\mathcal{H} $  has an inverse $S^{-1} \in \mathcal{B}(Q \mathcal{H}, \overline{\mathcal{D}^\intercal}^{\|\cdot\|_{\mathcal{H}}})$, we use the assumption that $Q: \overline{\mathcal{D}^\intercal}^{\|\cdot\|_{\mathcal{H}}} \rightarrow Q\mathcal{H}$ is boundedly invertible. Denoting this inverse by $Q^- \in \mathcal{B}(Q \mathcal{H}, \overline{\mathcal{D}^\intercal}^{\|\cdot\|_{\mathcal{H}}})$, we have 
\begin{align}
	S = \beta QBQ \cdot Q \cdot  \left[Id_{\overline{\mathcal{D}^\intercal}^{\|\cdot\|_{\mathcal{H}}} }+\frac{1}{\beta}Q^{-}(QBQ)^{-1}   
	(QA
	- zQ - Q[A\restriction_{\mathcal{D}}] R PA)
	)       
	 \right].
\end{align}
We note that $(QBQ)^{-1} \in \mathcal{B}(Q\mathcal{H})$ and the term 
$(QA
- zQ - Q[A\restriction_{\mathcal{D}}] R PA)
)    $ extends to a bounded operator defined on all of $\overline{\mathcal{D}^\intercal}^{\|\cdot\|_{\mathcal{H}}}$. Thus for sufficiently large $\beta \gg 1$ the term in square brackets is invertible by a Neumann series as
\begin{align}\label{neumann_expression_s}
	S^{-1} = \frac{1}{\beta}  \left[\sum_{k = 0}^{\infty} \left[-Q^{-}(QBQ)^{-1}   
	(QA
	- zQ - Q[A\restriction_{\mathcal{D}}] R PA)\right]^k/\beta^k\right]
	\cdot Q^{-} \cdot
	[QBQ]^{-1} .
\end{align}
Verifying that $T(A + \beta B -z) = (A + \beta B -z)T = Id_{\mathcal{H}}$ boils done to block matrix multiplication; included in Appendix \ref{proof_support} for completeness.
From (\ref{neumann_expression_s}) we also infer $\|S^{-1}\| \lesssim 1/\beta \rightarrow 0$. The convergence in (\ref{convergence_preserved_domain}) is then established by noting that due to this fact, all but the top-left block in  (\ref{def_block_inverse}) are $\mathcal{O}(1/\beta)$.
\end{proof}

\begin{Ex}\label{compressed_example_II}
As an example, let us consider a closed, densely defined,  maximally dissipative  operator $A$. We decompose the original Hilbert space into an orthogonal sum as $\mathcal{H} = \mathcal{F} \oplus \mathcal{G}$, with $\dim \mathcal{G} < \infty$. Then with $P$ the orthogonal projection onto $\mathcal{F}$, the compression of $A$ defined as $PA: \mathcal{D} \rightarrow P\mathcal{H}$ (with $\mathcal{D} = \mathcal{D}(A) \cap P\mathcal{H}$) is densely defined and closed (as well as maximally dissipative itself) \cite[Theorem 2.2]{Nudelman2011}.  Furthermore
the domain of $A$ decomposes as the direct sum
$\mathcal{D}(A) = \mathcal{D} \oplus \mathcal{D}^\intercal$, with $\mathcal{D} = \mathcal{D}(A) \cap \mathcal{F}$ and $\dim \mathcal{G} = \dim\mathcal{D}^\intercal < \infty$ \cite{Nudelman2011}.

Let now $B$ be any operator with $ \text{ran}(B) = \mathcal{G}$ and  $\ker(B) = \mathcal{F}$  so that the Riesz projector at $0 \in \sigma(B)$ is orthogonal. 
 Then $A + \beta B $ converges in the generalized norm resolvent sense towars the compression of $A$ for any $z$ in the lower hemisphere of $\mathds{C}$. Indeed: Since $A$ and $PA$ are both dissipative, the entire lower hemisphere is contained in their respective resolvent sets. Since $\mathcal{G}, \mathcal{D}^\intercal$ are finite dimensional, the ranges of the operators $QA, A\restriction_{\mathcal{D}^\intercal}$ are finite dimensional, so that both operators triviallly extend to bounded operators on the closure of their inital domains. It remains to check that $Q:  \overline{\mathcal{D}^\intercal}^{\|\cdot\|_{\mathcal{H}}} \rightarrow Q \mathcal{H}$ is boundedly invertible. But $D^\intercal$ is finite dimensional and agrees with its closure. Moreover, $Q\restriction{D^\intercal} $ is clearly injective, so that the claim follows since $Q\restriction{D^\intercal} $ is an injective map between finite dimensional spaces of the same dimension.
\end{Ex}

Next we note that if $P$  in fact preserves the domain of $A$, we can relax the other assumptions in Theorem \ref{thm_bounded_on_range}:

\begin{Cor}\label{main_cor}
	 Assume $P \mathcal{D}(A) \subseteq \mathcal{D}(A)$ and that $P \mathcal{D}(A)$ is dense in $P\mathcal{H}$. Further assume the operators $QA, AQ$ (defined on their natural domains) extend to bounded operators $QA, AQ: \mathcal{H} \rightarrow \mathcal{H}$. Further assume there is $z \in \mathds{C}$ so that $z \in \rho(PAP)$.
	Then for sufficiently large $\beta \gg 1$ the operator $(A + \beta B - z)$ is boundedly invertible and we have the generalized norm resolvent convergence
		\begin{align}\label{convergence_preserved_domain_II}
		\|(A + \beta B - z)^{-1} - P(PAP -z)^{-1}P\| \lesssim 1/\beta \xrightarrow{\beta \rightarrow \infty} 0.
	\end{align}
\end{Cor}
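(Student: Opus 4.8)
The plan is to obtain the result by specialising the Schur--complement argument from the proof of Theorem~\ref{thm_bounded_on_range}: the extra hypothesis $P\mathcal{D}(A)\subseteq\mathcal{D}(A)$ replaces the graph-norm splitting $\mathcal{D}\oplus\mathcal{D}^{\intercal}$ used there by the honest algebraic splitting $\mathcal{D}(A)=P\mathcal{D}(A)\oplus Q\mathcal{D}(A)$ and makes the relevant block operator genuinely diagonally dominant, so that neither lower-boundedness of anti-commutators nor orthogonality of $P$ is needed. I would first record the structural facts. Since $P$, hence $Q=\Id_{\mathcal{H}}-P$, preserves $\mathcal{D}(A)$, every $\phi\in\mathcal{D}(A)$ decomposes as $P\phi+Q\phi$ with both summands again in $\mathcal{D}(A)$; thus $\mathcal{D}(A)=P\mathcal{D}(A)\oplus Q\mathcal{D}(A)$ and $P\mathcal{D}(A)=\mathcal{D}(A)\cap P\mathcal{H}=:\mathcal{D}$. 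Next, $Q\mathcal{D}(A)$ is dense in $Q\mathcal{H}$ (the image of the dense set $\mathcal{D}(A)$ under the bounded surjection $Q$); combining this with closedness of $A$ and the fact that $AQ$ extends to a bounded operator on $\mathcal{H}$, a short limiting argument yields $Q\mathcal{H}\subseteq\mathcal{D}(A)$, i.e.\ $Q\mathcal{D}(A)=Q\mathcal{H}$. Finally, since $B$ commutes with its Riesz projection $P$ and the quasi-nilpotent part vanishes, $BP=PB=0$; hence $QBQ=B$, and $0\notin\sigma(QBQ)$, so $B$ restricts to a boundedly invertible operator on $Q\mathcal{H}$ (as in Lemma~\ref{conv_to_projection}).

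With these in hand, viewing $A+\beta B-z$ as an operator $\mathcal{D}\oplus Q\mathcal{H}\longrightarrow P\mathcal{H}\oplus Q\mathcal{H}$ gives, using $PB=BP=0$, the block form
\[
A+\beta B - z \;=\;
\begin{pmatrix}
 PAP-z & PAQ\\
 QAP & QAQ+\beta B-z
\end{pmatrix},
\]
where the off-diagonal entries $PAQ=P(AQ)$ and $QAP=(QA)P$ are bounded by hypothesis, the top-left entry has bounded inverse $R:=(PAP-z)^{-1}$ since $z\in\rho(PAP)$, and the bottom-right entry is a bounded operator on $Q\mathcal{H}$ equal to $\beta B$ plus the $\beta$-independent bounded operator $QAQ-z$. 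Its Schur complement is $S:=(QAQ+\beta B-z)-(QAP)R(PAQ)=\beta B+K$ with $K\in\mathcal{B}(Q\mathcal{H})$ independent of $\beta$; since $B$ is boundedly invertible on $Q\mathcal{H}$, a Neumann series makes $S$ invertible for all $\beta\gg 1$ with $\|S^{-1}\|\le C/\beta$. One then writes the candidate inverse in Frobenius form,
\[
T=\begin{pmatrix}
 R+R(PAQ)S^{-1}(QAP)R & -R(PAQ)S^{-1}\\
 -S^{-1}(QAP)R & S^{-1}
\end{pmatrix},
\]
verifies $T(A+\beta B-z)=(A+\beta B-z)T=\Id_{\mathcal{H}}$ and $\operatorname{Ran}(T)\subseteq\mathcal{D}(A)$ by the same block multiplication as in Appendix~\ref{proof_support}, and reads off that every block of $T$ except the top-left one is $\mathcal{O}(1/\beta)$ while the top-left one is $R+\mathcal{O}(1/\beta)$. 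Since $(PAP-z)^{-1}$ acts as $R$ on $P\mathcal{H}$ and as $-z^{-1}$ on $Q\mathcal{H}$, the block-diagonal limit is precisely $P(PAP-z)^{-1}P$, which establishes the bound $(\ref{convergence_preserved_domain_II})$ and in particular bounded invertibility of $A+\beta B-z$ for large $\beta$.

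The only non-routine step is the structural reduction in the first paragraph --- in particular showing $Q\mathcal{H}\subseteq\mathcal{D}(A)$, where closedness of $A$, boundedness of $AQ$, and density of $\mathcal{D}(A)$ all enter --- together with checking that the block entries above genuinely map between the stated spaces and that the pseudo-inverse $T$ has range in $\mathcal{D}(A)$. Once the block picture is in place, the Schur-complement and Neumann-series estimates are word-for-word analogues of those already carried out in the proof of Theorem~\ref{thm_bounded_on_range}.
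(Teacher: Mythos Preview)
Your proposal is correct and follows essentially the same route as the paper: write $A+\beta B-z$ in block form with respect to the splitting induced by $P$ and $Q$, then invoke the Schur--complement/Neumann-series argument of Theorem~\ref{thm_bounded_on_range} with $\overline{\mathcal{D}^{\intercal}}^{\|\cdot\|_{\mathcal{H}}}$ replaced by $Q\mathcal{H}$. Your additional observation that closedness of $A$ together with boundedness of $AQ$ forces $Q\mathcal{H}\subseteq\mathcal{D}(A)$ is a clean way to justify that the domain genuinely splits as $P\mathcal{D}(A)\oplus Q\mathcal{H}$ and that $\operatorname{Ran}(T)\subseteq\mathcal{D}(A)$---a point the paper leaves implicit.
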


\begin{proof}
	We begin with a discussion of $PAP: [P D(A)] \rightarrow P\mathcal{H}$: Since  $P D(A)$ is dense in $PH$ this operator is densely defined. Furthermore note that since  $AP: [P D(A)] \rightarrow \mathcal{H}$ is closed, also $PAP$ is closed. Indeed, since $P$ is bounded we only need to establish closedness of $AP$. Hence assume there is a sequence $\{x_i\}_i$ in $P\mathcal{D}(A)$ so that $x_i \rightarrow x$ and $APx_i \rightarrow y$. Then  $\{x_i\}_i$ is also a sequence in $\mathcal{D}(A)$ so that $Ax_i \rightarrow y$. The result then follows from closedness of $A$ on its original domain.
	
	Next we set $AQ = A - AP$ on $\mathcal{D}(A)$ and by assumption are guaranteed that this extends to a bounded operator on all of $\mathcal{H}$. We then note 
	$
	A + \beta B -z = PAP - zP + PAQ + QAP + QAQ + QBQ -zQ$,
	so that our operator family has block structure as in the proof of Theorem \ref{thm_bounded_on_range}, only with $\overline{\mathcal{D}^{\intercal}}^{\|\cdot\|_{\mathcal{H}}}$ replaced by $Q\mathcal{H}$. To finish the proof of the Corollary, we can then use the same block-inverse argument as above.
\end{proof}

\begin{Ex}\label{main_graph_example}

As an example of the usefulness of Corollary \ref{main_cor}, we extend finite dimensional results in \cite{koke2024holonets} to the infinite dimensional setting:
Let us consider the Laplacian associated to a countably infinite  locally finite directed weighted graph $G = (V, m, E, b)$. Here $V$ is a countably infinite set of vertices and $m: V \rightarrow (0, \infty)$ is a weight function assigning a positive mass $m(v)$ to each node $v \in V$. The set 
 $E \subseteq V \times V$ consists of the edges in the graph $G$ and $a: V \times V \rightarrow [0,\infty)$ is a weight function so that $a(v,w) > 0$ if and only if $(v,w) \in E$.
Crucially, since we consider directed graphs, $a$ is not necessarily symmetric in its arguments (generically $a(v,w) \neq a(w,v)$).
 On the weighted $\ell^2$-space
\begin{align}
	\ell^2(G,m) = \left\{ f: V \rightarrow \mathds{C}: \sum\limits_{v \in V} m(v) |f(v)|^2 < \infty      \right\}
\end{align}
with inner product $\langle f,g\rangle_{\ell^2(V)} = \sum_{x \in V} \overline{f(x)} g(x) $, we can then introduce the graph Laplacian  via its local action as
 \begin{align}\label{initial_L_action}
 	[L_af](x) = \frac{1}{m(x)} \sum\limits_{y \in V} a(x,y) (f(x) - f(y)),
 \end{align}
initially defined as an operator with domain the functions $f$ of compact support.

 To limit the technical complexity introduced by working with directed graphs, we will now make two assumptions: First, we assume that at each node the in-degree equals the out-degree
\begin{align}
	d^{\text{in}}_{x} :=	\sum\limits_{y \in G} a(x,y) = \sum\limits_{y \in G} a(y,x) =:   d^{\text{out}}_{x}.
\end{align} 
This is known as a  'Kirchhoff assumption' in the literature, since it resembles Kirchhoff's first law in electrical circuits \cite{anne2018sectoriality, balti2017,balti2017a, balti2017b}.
Second, we make the technical assumption that there exists $M \geq 0$ so that $\sum_{w \in V} |a(v,w) - a(w,v)| \leq M m(v) $ for each node $v \in V$. Under these two assumptions, the operator $L_a$ on its initial domain is sectorial \cite{anne2018sectoriality}, so that by \cite[Theorem V.3.4]{kato1995perturbation} $L_a$ it is closable. Below, we will hence consider the operator $(L_a, D(L_a))$ arising as the closure of the initial operator defined in (\ref{initial_L_action}).

We now pick out a  finite set of vertices $W \subseteq V$ and introduce a new weight function $b$ that only considers weights between nodes in $W$ as
$
b(x,y) = \chi_W(x) \chi_w(y)	b(x,y)
$,
with $\chi_W$ the characteristic function of the set vertex subset $W$. We will here assume that also the subgraph determined by $W$ is connected, in the sense that there is at least one node from which all other nodes in $W$ may be reached via a path along edges $(x,y)$ for which none of the weights $b(x,y)$ vanishes.  We also assume that the subgraph determined by $W$ satisfies the Kirchhoff assumption (i.e. $\sum_{y \in G} b(x,y) = \sum_{y \in G} b(y,x)$).\footnote{Treating the general case outside any such assumptions is completely within reach using the tools developed in this section.
	Since however the corresponding graph theoretic interpretations become significantly more involved, we investigate this general setting 
in a companion paper \cite{koke_graph26}.}

 In analogy to (\ref{initial_L_action}), we can then introduce the  Laplacian corresponding to the subgraph $W$ as
 \begin{align}\label{initial_L_action_finite}
	[L_bf](x) = \frac{1}{m(y)} \sum\limits_{x \in V} b(x,y) (f(x) - f(y)),
\end{align}
which after extending by zero outside of $W$ clearly defines a bounded operator on $\ell^2(V)$. 
Since the range of $L_b$ is finite dimensional, its spectrum is discrete. By standard results in graph theory, the algebraic multiplicity of the eigenvalue $0 \in L_b$ is equal to its geometric multiplicity \cite{CaughmanVeerman2006}, so that the nilpotent part of the corresponding Riesz projector $P$ vanishes. In fact, since we assumed that there is a node from which all other nodes in $W$ can be reached via directed paths completely contained in this subgraph, $P$ is even an orthogonal projection, with its action given as $[Pf](x) = \chi_{G \setminus W}(x)f(x) +     \langle \chi_W,f \rangle_{\ell^2(V)} \chi_W(x)$ \cite{Veerman2020APO}. We now consider the setting $A + \beta B = L_a + \beta L_b$ where we scale up the weights in the subgraph $W$. 
We are then interested in the effective Laplacian $\underline{L_a}$ describing the graph in the infinite weight limit.

We first note that $P\mathcal{H}$ is precisely comprised of those functions $f \in \ell^2(V)$ that are constant on $W$. We nay thus defined a new node set $\underline{V} = V\setminus W \cup \{W\}$ obtained by removing all nodes of $W$ from $V$ and adding back only a single node   $w := \{W\}$ representing the cluster $W$. We may thus unitarily identify $P \mathcal{H}$ with the Hilbert space $\ell^2(\underline{V})$, where the inner product between two functions $\underline{f}, \underline{g}: \underline{W} \rightarrow \mathds{C}$ in the image of $P$ is determined as 
\begin{align}
\underline{m}(w) \overline{\underline{f}(w)} \underline{g}(w) +	 \sum\limits_{x \in V\setminus W} m(x) \overline{f(x)} g(x) =: \langle \underline{f}, \underline{g} \rangle_{\ell^2(W)} \equiv \langle Pf, Pg \rangle_{\ell^2(V)},
\end{align}
with $f,g \in \ell^2(V)$ arbitrary functions in the pre-images $P^{-1}(f), P^{-1}(g)$. Thus we have $\underline{m}(w) = \sum_{x \in W} m(x)$ and $\underline{f}(w) = \frac{1}{\underline{m}} \sum_{x \in W} f(x)$.

We will now show that the limit effective operator guaranteed to exist by Corollary  \ref{main_cor} is precisely given as the closure of the graph Laplacian 

 \begin{align}\label{initial_L_action_reduced}
	[\underline{L_a}\underline{f}](x) = \frac{1}{\underline{m}(x)} \sum\limits_{y \in \underline{V}} \underline{a}(x,y) (\underline{f}(x) - \underline{f}(y)),
\end{align}
where $\underline{a}(w,y) = \sum_{x \in  W } a(x,y)$ and similarly $\underline{a}(y,w)$ is also given via an aggregation of the connections into the cluster $W$. To apply Theorem \ref{thm_bounded_on_range}, we check all its conditions are met: 
 Since $P$ acting on $f$ only linearly modifies $f$ locally within $W$ and any function completely supported in $W$ is in the domain of $L_a$, we have $P \mathcal{D}(L_a) \subseteq \mathcal{D}(L_a)$. Using our identification $\ell^2(W) \equiv P\mathcal{H}$, it is clear that $P\mathcal{D}(L_a)$ in particular contains all functions that are compactly supported on $\underline{V}$, so that $P\mathcal{D}(L_a)$ is dense in $\ell^2(\underline{V})$.
Next we note that the complementary projection $Q= Id_{\ell^2(V)} - P$ is given simply as $[Qf](x) = \chi_W(x)f(x) - \langle \chi_W,f \rangle_{\ell^2(V)} \chi_W(x) $. Clearly the range of $Q$ is finite dimensional, so that $QA,AQ$ are well defined and extend to bounded operators on all of $\ell^2(V)$. Since the numerical range of  $L_a$ is contained in the right hemisphere of $\mathds{C}$ \cite[Proposition 2.2]{anne2018sectoriality}, we have $\lambda \in \rho(L_a)$ for any $\lambda \in (-\infty, 0)$ \cite{ArlinskiZagrebnov2010,kato1995perturbation}. Since $P$ is an orthogonal projection, the numerical range may only contract under $L_a \mapsto P L_a P$. Thus $\lambda$ is also not in the spectrum of $\underline{L_b}$. Hence the conditions of Theorem \ref{thm_bounded_on_range} are met and we have generalized norm resolvent convergence of $L_a + \beta L_b$ towards $PL_aP$. It is then a straightforward exercise to establish that under the identification $P\mathcal{H} \cong \ell^2(\underline{V})$, the operator $PL_aP$ exactly corresponds to the closure of the graph Laplacian  in (\ref{initial_L_action_reduced}).

\end{Ex} 
	\appendix

	\section{Chiral Dirac Equation in an \(\mathrm{SU}(2)\) Gauge Background}
	
	In this appendix, we provide further context and details for Example \ref{weak_interaction_example}. The derivations below are standard, as general references a number of books on particle physics or the Dirac equation may serve \cite{peskin1995introduction, hamilton2017mathematical, Bettini2014, srednicki2007quantum, Thaller1992}.
	
	\medskip
	
	In the Standard Model of particle physics, the weak interaction is mediated by an \(\mathrm{SU}(2)\) gauge field acting on left-handed fermion doublets. This chiral gauge interaction is responsible for processes such as beta decay and neutrino scattering. The \(\mathrm{SU}(2)\) symmetry acts only on the left-chiral components of fermions, organized into doublets (e.g., \((\nu_e, e)_L\)), while the right-chiral components transform trivially under the gauge group. In this section, we isolate this weak-sector structure in a classical gauge background and derive the associated Dirac equation in an explicit, componentwise form suitable for mathematical analysis.

\subsection{1 + 1 Dimensions}\label{1+1_d_weak_interaction_derivation}
	
Mathematically speaking, we here hence consider a system of spinor fields on Minkowski space \(\mathbb{R}^{1+1}\), interacting with a fixed, classical \(\mathrm{SU}(2)\) gauge field. The unknown is a field \(\Psi: \mathbb{R}^{1+1} \to \mathbb{C}^4\), which decomposes as a doublet of Dirac spinors:
\[
\Psi = \begin{pmatrix}
	\psi^{(1)} \
	\psi^{(2)}
\end{pmatrix}^\top, \quad
\psi^{(a)} = \begin{pmatrix}
	\psi_L^{(a)} \
	\psi_R^{(a)}
\end{pmatrix}^\top, \quad a = 1, 2,
\]
where each \(\psi_L^{(a)}\) and \(\psi_R^{(a)}\) is a complex-valued function of spacetime. These represent the two so called 'chiral' components (\textbf{L}eft and \textbf{R}ight) of a two-component Dirac spinor in \(1+1\) dimensions \cite{hamilton2017mathematical}.  
The gauge field $W $ consists of functions $W_{\mu}$ ($\mu = 0,1$) where each function $W_\mu$ maps from spacetime $\mathds{R}^{1+1}$ into the Lie algebra \(\mathfrak{su}(2)\). This is a three dimensional Lie algebra, whose generators $\{\tau^1,\tau^2, \tau^3 \}$ may simply be taken to be given by the Pauli matrices $\{\sigma_x, \sigma_y, \sigma_z \}$ :
\begin{align}
	\sigma_x&= \begin{pmatrix}
		0 & 1 \\
		1 & 0
	\end{pmatrix}, \quad
	.
	\sigma_y = \begin{pmatrix}
		0 & -i \\
		i & 0
	\end{pmatrix}, \quad
	\sigma_z = \begin{pmatrix}
		1 & 0 \\
		0 & -1
	\end{pmatrix}.
\end{align}
Each gauge field  $W_mu:\mathds{R}^{1+1} \rightarrow \mathfrak{su}(2) $	may thus be written as \(W_\mu = W_\mu^b \frac{\tau^b}{2}\), with the functions \(W_\mu^c\) representing the components of the fixed background gauge field $W_\mu$.

Crucially, in the weak sector of the standard model, the gauge coupling acts only on the left-chiral components of the spinor field \cite{peskin1995introduction}. The corresponding covariant derivative \(D_\mu\) acting on each left-handed component is given as \cite{srednicki2007quantum}
\begin{align}
	D_\mu \psi_L^{(a)} := \partial_\mu \psi_L^{(a)} - i g \sum_{b=1}^2 \sum_{c=1}^3 \left( \frac{\tau^c}{2} \right)_{ab} W_\mu^c \psi_L^{(b)},
\end{align}
and on the right-handed components by the ordinary derivative
\begin{align}
	D_\mu \psi_R^{(a)} := \partial_\mu \psi_R^{(a)}.
\end{align}

Using the chiral (Weyl) representation of the Dirac gamma matrices in \(1+1\) dimensions $
\gamma^\mu = (\gamma^0, \gamma^1), 
\gamma^0 = \sigma^1, \quad
\gamma^1 = -i \sigma^2,
$
where \(\sigma^i\) are the Pauli matrices,
the Dirac equation is given as
\begin{align}
	(i \gamma^\mu D_\mu - m) \Psi = 0.
\end{align}

Unpacking this equation for each component \(a = 1,2\), and separating left- and right-handed parts, we obtain the two-component systems
\begin{align}
	i (\partial_t + \partial_x) \psi_R^{(a)} &= m \psi_L^{(a)}, \\
	i (\partial_t - \partial_x) \psi_L^{(a)} &= g \sum_{b=1}^2 \sum_{c=1}^3 \left( \frac{\tau^c}{2} \right)_{ab} (W_0^c - W_1^c) \psi_L^{(b)} + m \psi_R^{(a)}.
\end{align}

This system represents an evolution equation for the pair \((\psi_L^{(a)}, \psi_R^{(a)})\) with non-abelian coupling on the left-handed part and mass mixing between left- and right- chiralities. The gauge field couples the left-handed components across the \(\mathrm{SU}(2)\) indices \(a=1,2\), while the right-handed components evolve freely except for mass coupling.
Collecting the spinor doublets into 4-component vectors
\[
\psi_L := \begin{pmatrix} \psi_L^{(1)} \ \psi_L^{(2)} \end{pmatrix}^\top \quad \text{and} \quad
\psi_R := \begin{pmatrix} \psi_R^{(1)} \ \psi_R^{(2)} \end{pmatrix}^\top,
\]	
the coupled system takes the matrix form
\begin{align}
	i \frac{\partial}{\partial t} 
	\begin{pmatrix}
		\psi_L \\
		\psi_R
	\end{pmatrix}
	=
	\begin{pmatrix}
		- i \partial_x  \mathbb{I}_2 + g (W_0^c - W_1^c)  \frac{\tau^c}{2}
		& m \mathbb{I}_2 \\
		m \mathbb{I}_2 & i \partial_x  \mathbb{I}_2
	\end{pmatrix}
	\begin{pmatrix}
		\psi_L \\
		\psi_R
	\end{pmatrix}.
\end{align}

\subsection{1 + 3 Dimensions}\label{real_world_weak_example}

We now extend the previous analysis to spinor fields on four-dimensional Minkowski space \(\mathbb{R}^{1+3}\), interacting with a fixed, classical \(\mathrm{SU}(2)\) gauge field. As in the \(1+1\) case, the unknown field \(\Psi: \mathbb{R}^{1+3} \to \mathbb{C}^8\) is a doublet of Dirac spinors:
\[
\Psi = \begin{pmatrix} \psi^{(1)} \\ \psi^{(2)} \end{pmatrix}, \quad \psi^{(a)} = \begin{pmatrix} \psi_L^{(a)} \\ \psi_R^{(a)} \end{pmatrix}, \quad a = 1, 2,
\]
where \(\psi_L^{(a)}\) and \(\psi_R^{(a)}\) are now two-component Weyl spinors.

The \(\mathrm{SU}(2)\) gauge field is specified by \(W_\mu = W_\mu^b \, \tau^b / 2\), where \(\tau^1, \tau^2, \tau^3\) are the Pauli matrices. As before, the gauge coupling acts only on the left-chiral components, so the covariant derivative acts on left-handed components as:
\[
D_\mu \psi_L^{(a)} := \partial_\mu \psi_L^{(a)} - i g \sum_{b=1}^2 \sum_{c=1}^3 \left( \frac{\tau^c}{2} \right)_{ab} W_\mu^c \psi_L^{(b)},
\]
and on right-handed components by the ordinary derivative:
\[
D_\mu \psi_R^{(a)} := \partial_\mu \psi_R^{(a)}.
\]

We adopt the Weyl (chiral) representation for the Dirac gamma matrices in \(1+3\) dimensions:
\[
\gamma^\mu = \begin{pmatrix} 0 & \sigma^\mu \\ \bar{\sigma}^\mu & 0 \end{pmatrix}, \quad \text{with } \sigma^\mu = (I_2, \sigma^1, \sigma^2, \sigma^3), \quad \bar{\sigma}^\mu = (I_2, -\sigma^1, -\sigma^2, -\sigma^3),
\]
where the \(\sigma^i\) are the Pauli matrices.

The Dirac equation in this chiral basis takes the form:
\[
(i \gamma^\mu D_\mu - m) \Psi = 0.
\]

Expanding by components, we obtain:
\begin{align}
	i \sigma^\mu \partial_\mu \psi_R^{(a)} &= m \psi_L^{(a)}, \\
	i \bar{\sigma}^\mu \left( \partial_\mu \psi_L^{(a)} - i g \sum_{b=1}^2 \sum_{c=1}^3 \left( \frac{\tau^c}{2} \right)_{ab} W_\mu^c \psi_L^{(b)} \right) &= m \psi_R^{(a)}.
\end{align}

Rewriting in Hamiltonian (time-evolution) form:
\begin{align}
	i \partial_t \psi_L^{(a)} &= -i \sigma^i \left( \partial_i \delta_{ab} - i g \sum_{c=1}^3 \left( \frac{\tau^c}{2} \right)_{ab} W_i^c \right) \psi_L^{(b)} + g \sum_{c=1}^3 \left( \frac{\tau^c}{2} \right)_{ab} W_0^c \psi_L^{(b)} + m \psi_R^{(a)}, \\
	i \partial_t \psi_R^{(a)} &= -i \sigma^i \partial_i \psi_R^{(a)} + m \psi_L^{(a)}.
\end{align}

This system describes a coupled evolution of left- and right-chiral spinors, where only the left-handed components interact with the gauge field. As in the \(1+1\) case, the mass term introduces a chiral mixing, but the gauge field couples only the left-handed components across the \(\mathrm{SU}(2)\) indices \(a = 1, 2\).

We now collect the spinor doublets into four-component vectors:
\[
\psi_L := \begin{pmatrix} \psi_L^{(1)} \\ \psi_L^{(2)} \end{pmatrix}, \quad \psi_R := \begin{pmatrix} \psi_R^{(1)} \\ \psi_R^{(2)} \end{pmatrix},
\]
so that the coupled evolution system becomes:
\[
i \partial_t \begin{pmatrix} \psi_L \\ \psi_R \end{pmatrix} =
\begin{pmatrix}
	- i \sigma^i \left( \partial_i \otimes \mathbb{I}_2 - i g \sum_{c=1}^3 W_i^c \otimes \frac{\tau^c}{2} \right) + g \sum_{c=1}^3 W_0^c \otimes \frac{\tau^c}{2}
	& m \mathbb{I}_4 \\
	m \mathbb{I}_4 & -i \sigma^i \partial_i \otimes \mathbb{I}_2
\end{pmatrix}
\begin{pmatrix} \psi_L \\ \psi_R \end{pmatrix}
\]

This matrix equation has the same structure as its lower-dimensional analog, but now reflects the full spatial complexity of three dimensions. The system remains first-order in both time and space, with matrix-valued coefficients arising from the fixed non-Abelian background gauge field.

	\paragraph{Example:}	
	As an example for the setting of Theorem~\ref{convergence_to_compression}, let us consider the Hamiltonian operator that governs the time evolution of spinor fields subject to a chiral \(\mathrm{SU}(2)\) gauge interaction. This type of structure arises in the electroweak sector of the Standard Model of particle physics, where certain spin-\(\tfrac{1}{2}\) particles (such as electrons and neutrinos) are grouped into pairs called \(\mathrm{SU}(2)\) doublets. These doublets interact with a background gauge field through their left-handed components only. A spinor is said to be \emph{left-handed} or \emph{right-handed} based on how it transforms under spatial rotations and Lorentz boosts: roughly, these terms correspond to projections onto eigenspaces of the Dirac operator under a so called chirality operator, 
	separating the spinor into two components that evolve differently under relativistic symmetry transformations. The term “chiral coupling” refers to the fact that in the standard model only the left-handed component of each spinor field interacts with the gauge field, while the right-handed component evolves independently except for mass coupling.
	
	In the setting we consider, the gauge field is treated as a smooth, matrix-valued, externally prescribed background---that is, a fixed coefficient field entering the differential operator, not subject to any dynamical evolution. The resulting system describes the evolution of a pair of coupled spinor fields via a Dirac-type equation with internal matrix structure. The evolution equation takes the form
	\begin{align}
		i \frac{\partial}{\partial t} 
		\begin{pmatrix}
			\psi_L \\
			\psi_R
		\end{pmatrix}
		&=
		H_\beta
		\begin{pmatrix}
			\psi_L \\
			\psi_R
		\end{pmatrix},
	\end{align}
	with  the coupling constant $\beta$ dependent Hamiltonian operator
	\begin{align}
		A_\beta = 	\begin{pmatrix}
			- i \sigma^i \left( \partial_i \otimes \mathbb{I}_2 - i \beta \sum_{c=1}^3 W_i^c \otimes \frac{\tau^c}{2} \right) 
			+ \beta \sum_{c=1}^3 W_0^c \otimes \frac{\tau^c}{2}
			& m \mathbb{I}_4 \\
			m \mathbb{I}_4 & -i \sigma^i \partial_i \otimes \mathbb{I}_2
		\end{pmatrix}
	\end{align}
	
	Here  \(\psi_L\) and \(\psi_R\) are column vectors of two-component spinor fields representing the left- and right-chiral components of an \(\mathrm{SU}(2)\) spinor doublet. The functions \(W_\mu^c\) represent the components of the fixed background gauge field, and the \(\tau^c\) are the standard Pauli matrices generating the Lie algebra \(\mathfrak{su}(2)\). The mass term \(m\) couples the left- and right-handed fields. 
	
	To ilustrate Theorem~\ref{convergence_to_compression}, we now take each $W^c_i \in C^\infty(\mathds{R})\cap L^\infty(\mathds{R})$. Then it is easy to check that for any value of the coupling constant $\beta$ the Hamiltonian $H_\beta$ is self adjoint on the dense domain $\mathds{C}^8 \otimes H^{1}(\mathds{R})^3 \subseteq \mathds{C}^8 \otimes L^{2}(\mathds{R})^3$.
	
	For simplicity in presentation (similar results hold in the general case), let us assume that besides $W^3_0$ all other gauge fields vanish ($W_\mu^a =0$). Then we may write the coupling constant dependent Hamiltonian $H_\beta$ as $H_\beta = A + \beta B$, with
	\begin{align}
		A = 	\begin{pmatrix}
			- i \sigma^i  \partial_i \otimes \mathbb{I}_2 
			& m \mathbb{I}_4 \\
			m \mathbb{I}_4 & -i \sigma^i \partial_i \otimes \mathbb{I}_2
		\end{pmatrix}
		\quad \text{and} \quad
		B =\frac{1}{2} \begin{pmatrix}
			W_0^3 \otimes \sigma_3
			& 0\\
			0&  0
		\end{pmatrix}
		= \frac{W_0^3 }{2} \begin{pmatrix}
			I_2 \otimes \sigma_3 & 0 \\
			0 & 0
		\end{pmatrix}.
	\end{align}
	Assuming $W_0^3 (x) = 0$ holds only on a set of measure zero, the kernel of $B$ consists precisely of the right handed spinors $\psi_R \in \mathds{C}^4 \otimes L^2(\mathds{R}^3)$. Compressing $A$ onto the space of right handed spinors $\mathscr{V} \equiv 0 \oplus  \mathds{C}^4 \otimes L^2(\mathds{R}^3) \subseteq  \mathds{C}^8 \otimes L^2(\mathds{R}^3)$ yields $
	\underline{A} = -\sigma^i \otimes \mathbb{I}_2$,
	which is clearly self adjoint on its compressed domain $\mathds{C}^4 \otimes H^1(\mathds{R}^3)$.
	
	Thus we may apply Theorem~\ref{convergence_to_compression} to establish strong resolvent convergence of $H_\beta$ to the effective hamiltonian $\underline{A} =  -i\sigma^i \partial_i \otimes \mathbb{I}_2$. This limit Hamiltonian  describes a \textit{decoupled} model of two right handed  massless Weyl spinors. Each such spinor is respectively  governed by a Dirac equation.

\section{The 1+2 dimensional Dirac equation at fixed momentum}\label{fixed_momentum_de}
In this section we provide further context for Example \ref{high_energy_example_II}: Namely we consider the Dirac $2+1$-dimensional Dirac equation at fixed momentum in $y$ direction. In this setting, the full  Dirac equation in Hamiltonian form may be given as \cite{Thaller1992, Koke2020} 
\begin{align}
i \partial_t \Psi =  
		\left(
		- i \sigma_x \partial_x  - i \sigma_y \partial_y + m \sigma_z
		\right) \Psi.
\end{align}	

Restricting $\Psi$ to have fixed momentum $\Psi(x,y,t) = \tilde{\Psi}(x,t) \cdot e^{-k_y y}$ yields the effective equation
\begin{align}
	i \partial_t \tilde{\Psi} =  
	\left(
	- i \sigma_x \partial_x  -  \sigma_y k_y + m \sigma_z
	\right) \tilde{\Psi}.
\end{align}		
This is exactly the setting we consider in Example \ref{high_energy_example_II}, with the modification that instead of a single Dirac spinor we instead consider a doublet of them.

\section{Schur complement and block inverse calculations for the proof of Theorem \ref{thm_bounded_on_range}}\label{proof_support}
In Theorem \ref{thm_bounded_on_range} we consider the closed operator
	\begin{align}
	(A_\beta  - z)
	=
	\begin{pmatrix}
		[PA - zP]\restriction_{\mathcal{D}} & [PA - zP]\restriction_{\mathcal{D}^\intercal}   \\
		QA\restriction_{\mathcal{D}}    & [QA + \beta QB - zQ]\restriction_{\mathcal{D}^\intercal}
	\end{pmatrix},
\end{align}
where by assumption, the off-diagonal entries constitute bounded operators. 
We then defined the Schur complement	$ 	S := [QA + \beta QB - zQ - QAP R PA]\restriction_{\mathcal{D}^\intercal}  \in \mathcal{B}(\overline{\mathcal{D}^\intercal}^{\| \cdot \|_{\mathcal{H}}}, Q  \mathcal{H}),
$
and the resolvent 	$
R := (PA\restriction_{\mathcal{D}} - zP)^{-1} $ of $PA\restriction_{\mathcal{D}} - zP$ and verified that $R$ and $S^{-1}$ are well-defined (for sufficiently large $\beta \gg 1$).  We then defined a candidate inverse as:
\begin{align}
	T =
	\begin{bmatrix}
		R + R P[A\restriction_{\mathcal{D}^{\intercal}}] S^{-1} Q[A\restriction_{\mathcal{D}}]P R & -R P[A\restriction_{\mathcal{D}^{\intercal}}]S^{-1} \\
		- S^{-1} Q[A\restriction_{\mathcal{D}}] R & S^{-1}
	\end{bmatrix}.
\end{align}
Here we now verify that this is indeed a true inverse:
	
	\medskip
	
	We consider the operator
	\[
	A_\beta - z =
	\begin{bmatrix}
		A_{11} & A_{12} \\
		A_{21} & A_{22}
	\end{bmatrix}
	\]
	acting on the domain
	\[
	\operatorname{Dom}(A_\beta - z) = \mathcal{D} \oplus \mathcal{D}^\intercal \subset \mathcal{H},
	\]
	where the block operators are defined as follows with their respective domain and codomain:
	\begin{itemize}
		\item \( A_{11} := (PA - zP)\restriction_{\mathcal{D}} \colon \mathcal{D} \to P \mathcal{H} \),
		\item \( A_{12} := (PA - zP)\restriction_{\mathcal{D}^\intercal} \colon \mathcal{D}^\intercal \to P \mathcal{H} \),
		\item \( A_{21} := QA \restriction_{\mathcal{D}} \colon \mathcal{D} \to Q \mathcal{H} \),
		\item \( A_{22} := (QA + \beta QB - zQ)\restriction_{\mathcal{D}^\intercal} \colon \mathcal{D}^\intercal \to Q \mathcal{H} \).
	\end{itemize}
	
	The resolvent operator
	\[
	R := A_{11}^{-1} = (PA \restriction_{\mathcal{D}} - zP)^{-1} \colon P \mathcal{H} \to \mathcal{D}
	\]
	is well-defined and bounded, while the Schur complement
	\[
	S := A_{22} - A_{21} R A_{12} \colon \mathcal{D}^\intercal \to Q \mathcal{H}
	\]
	has a bounded inverse
	\[
	S^{-1} \colon Q \mathcal{H} \to \overline{\mathcal{D}^\intercal}^{\|\cdot\|_{\mathcal{H}}}.
	\]

	Furthermore domain and codomain  of each block entry of $T$ are as follows:
	
	\begin{itemize}
		\item \( T_{11} = R + R A_{12} S^{-1} A_{21} R \colon P \mathcal{H} \to \mathcal{D} \subset \mathcal{H} \), since
		\[
		R \colon P \mathcal{H} \to \mathcal{D}, \quad
		A_{21} R \colon P \mathcal{H} \to Q \mathcal{H}, \quad
		S^{-1} \colon Q \mathcal{H} \to \overline{\mathcal{D}^\intercal}, \quad
		A_{12} S^{-1} A_{21} R \colon P \mathcal{H} \to P \mathcal{H}.
		\]
		
		\item \( T_{12} = - R A_{12} S^{-1} \colon Q \mathcal{H} \to \mathcal{D} \subset \mathcal{H} \), since
		\[
		S^{-1} \colon Q \mathcal{H} \to \overline{\mathcal{D}^\intercal}, \quad
		A_{12} \colon \overline{\mathcal{D}^\intercal} \to P \mathcal{H}, \quad
		R \colon P \mathcal{H} \to \mathcal{D}.
		\]
		
		\item \( T_{21} = - S^{-1} A_{21} R \colon P \mathcal{H} \to \overline{\mathcal{D}^\intercal} \subset \mathcal{H} \), since
		\[
		R \colon P \mathcal{H} \to \mathcal{D}, \quad
		A_{21} \colon \mathcal{D} \to Q \mathcal{H}, \quad
		S^{-1} \colon Q \mathcal{H} \to \overline{\mathcal{D}^\intercal}.
		\]
		
		\item \( T_{22} = S^{-1} \colon Q \mathcal{H} \to \overline{\mathcal{D}^\intercal} \subset \mathcal{H}. \)
	\end{itemize}
	
	For ease of reference, this is summarized in the following table:
	
	\[
	\begin{array}{c|c|c}
		\text{Block} & \text{Domain} & \text{Codomain} \\
		\hline
		T_{11} & P \mathcal{H} & \mathcal{D} \subset \mathcal{H} \\
		T_{12} & Q \mathcal{H} & \mathcal{D} \subset \mathcal{H} \\
		T_{21} & P \mathcal{H} & \overline{\mathcal{D}^\intercal} \subset \mathcal{H} \\
		T_{22} & Q \mathcal{H} & \overline{\mathcal{D}^\intercal} \subset \mathcal{H}
	\end{array}
	\]

	We now compute the matrix product \( T (A_\beta - z) \) using block matrix multiplication:
	\begin{align}
		T (A_\beta - z) =
		\begin{bmatrix}
			T_{11} A_{11} + T_{12} A_{21} & T_{11} A_{12} + T_{12} A_{22} \\
			T_{21} A_{11} + T_{22} A_{21} & T_{21} A_{12} + T_{22} A_{22}
		\end{bmatrix},
	\end{align}
	where
	\begin{align}
		T_{11} &= R + R A_{12} S^{-1} A_{21} R, & T_{12} &= - R A_{12} S^{-1}, \\
		T_{21} &= - S^{-1} A_{21} R, & T_{22} &= S^{-1}.
	\end{align}
	
	We now verify each block entry:
	
	\paragraph{Top-left block:}
	\begin{align}
		T_{11} A_{11} + T_{12} A_{21}
		&= \left(R + R A_{12} S^{-1} A_{21} R\right) A_{11} - R A_{12} S^{-1} A_{21} \notag \\
		&= R A_{11} + R A_{12} S^{-1} A_{21} R A_{11} - R A_{12} S^{-1} A_{21} \notag \\
		&= I + R A_{12} S^{-1} A_{21} - R A_{12} S^{-1} A_{21} = I.
	\end{align}
	
	\paragraph{Top-right block:}
	\begin{align}
		T_{11} A_{12} + T_{12} A_{22}
		&= (R + R A_{12} S^{-1} A_{21} R) A_{12} - R A_{12} S^{-1} A_{22} \notag \\
		&= R A_{12} + R A_{12} S^{-1} A_{21} R A_{12} - R A_{12} S^{-1} A_{22} \notag \\
		&= R A_{12} \left( I + S^{-1} A_{21} R A_{12} - S^{-1} A_{22} \right).
	\end{align}
	Using the identity \( S = A_{22} - A_{21} R A_{12} \Rightarrow S^{-1} A_{22} = I + S^{-1} A_{21} R A_{12} \), we find that
	\begin{align}
		I + S^{-1} A_{21} R A_{12} - S^{-1} A_{22} = 0,
	\end{align}
	so the top-right block is zero.
	
	\paragraph{Bottom-left block:}
	\begin{align}
		T_{21} A_{11} + T_{22} A_{21}
		= -S^{-1} A_{21} R A_{11} + S^{-1} A_{21} = -S^{-1} A_{21} + S^{-1} A_{21} = 0.
	\end{align}
	
	\paragraph{Bottom-right block:}
	\begin{align}
		T_{21} A_{12} + T_{22} A_{22}
		= -S^{-1} A_{21} R A_{12} + S^{-1} A_{22} = S^{-1}(A_{22} - A_{21} R A_{12}) = S^{-1} S = I.
	\end{align}
	
	\medskip

	Let us also explicitly verify that $T$ is a left-inverse:
	
	 Using the same block notation, we compute:
	\begin{align}
		(A_\beta - z) T =
		\begin{bmatrix}
			A_{11} T_{11} + A_{12} T_{21} & A_{11} T_{12} + A_{12} T_{22} \\
			A_{21} T_{11} + A_{22} T_{21} & A_{21} T_{12} + A_{22} T_{22}
		\end{bmatrix}.
	\end{align}
	
	\paragraph{Top-left block:}
	\begin{align}
		A_{11} T_{11} + A_{12} T_{21}
		&= A_{11}(R + R A_{12} S^{-1} A_{21} R) + A_{12} (-S^{-1} A_{21} R) \notag \\
		&= A_{11} R + A_{11} R A_{12} S^{-1} A_{21} R - A_{12} S^{-1} A_{21} R \notag \\
		&= I + A_{12} S^{-1} A_{21} R - A_{12} S^{-1} A_{21} R = I.
	\end{align}
	
	\paragraph{Top-right block:}
	\begin{align}
		A_{11} T_{12} + A_{12} T_{22}
		&= A_{11}(- R A_{12} S^{-1}) + A_{12} S^{-1} \notag \\
		&= -A_{11} R A_{12} S^{-1} + A_{12} S^{-1} \notag \\
		&= - A_{12} S^{-1} + A_{12} S^{-1} = 0.
	\end{align}
	
	\paragraph{Bottom-left block:}
	\begin{align}
		A_{21} T_{11} + A_{22} T_{21}
		&= A_{21}(R + R A_{12} S^{-1} A_{21} R) + A_{22}(-S^{-1} A_{21} R) \notag \\
		&= A_{21} R + A_{21} R A_{12} S^{-1} A_{21} R - A_{22} S^{-1} A_{21} R \notag \\
		&= A_{21} R \left( I + A_{12} S^{-1} A_{21} R - S^{-1} A_{22} \right).
	\end{align}
	Using again the identity
	\begin{align}
		S^{-1} A_{22} = I + S^{-1} A_{21} R A_{12}
		\Rightarrow I + A_{12} S^{-1} A_{21} R - S^{-1} A_{22} = 0,
	\end{align}
	we find the bottom-left block is zero.
	
	\paragraph{Bottom-right block:}
	\begin{align}
		A_{21} T_{12} + A_{22} T_{22}
		&= A_{21}(- R A_{12} S^{-1}) + A_{22} S^{-1} \notag \\
		&= -A_{21} R A_{12} S^{-1} + A_{22} S^{-1} = S S^{-1} = I.
	\end{align}
	
	\medskip
	
	Hence, $
		(A_\beta - z) T = I$.
	This completes the verification that \( T \) is both a left and right inverse of \( A_\beta - z \), and thus, $
		T = (A_\beta - z)^{-1}$.

	\bibliographystyle{unsrt}
	\bibliography{bibfile}

@article{koke_graph26,
title   = {Di-Graphs with tightly connected Clusters: Effective Graph Laplacians and Resolvent Convergence},
author  = {Koke, Christian},
journal = {arXiv preprint arXiv:2601.18057},
year    = {2026},
archivePrefix = {arXiv},
eprint  = {2601.18057},
primaryClass = {math.FA},
url     = {https://arxiv.org/abs/2601.18057}
}

@inproceedings{koke2025multiscale,
	title     = {On Multi-scale Graph Representation Learning},
	author    = {Koke, Christian and Schnaus, Dominik and Shen, Yuesong and Saroha, Abhishek and Eisenberger, Marvin and Rieck, Bastian and Bronstein, Michael M. and Cremers, Daniel},
	booktitle = {ICLR 2025 Workshop on Learning Meaningful Representations for Life (LMRL)},
	year      = {2025},
	url       = {https://openreview.net/forum?id=X3OMHwfsxk}
}

@inproceedings{koke2025graphscale,
	title     = {Graph Networks Struggle With Variable Scale},
	author    = {Koke, Christian and Shen, Yuesong and Saroha, Abhishek and Eisenberger, Marvin and Rieck, Bastian and Bronstein, Michael M. and Cremers, Daniel},
	booktitle = {ICLR 2025 Workshop ICBINB},
	year      = {2025},
	month     = mar,
	publisher = {OpenReview},
	url       = {https://openreview.net/forum?id=N5n6SAfnU0}
}

@inproceedings{koke2024transferability,
 	title     = {Transferability for Graph Convolutional Networks},
 	author    = {Koke, Christian and Saroha, Abhishek and Shen, Yuesong and Eisenberger, Marvin and Bronstein, Michael M. and Cremers, Daniel},
 	booktitle = {ICML 2024 Workshop on Geometry-grounded Representation Learning and Generative Modeling},
 	year      = {2024},
 	url       = {https://openreview.net/forum?id=rKEdfcaqYX}
 }

@inproceedings{koke2025incorporating,
	title     = {On Incorporating Scale into Graph Networks},
	author    = {Koke, Christian and Shen, Yuesong and Saroha, Abhishek and Eisenberger, Marvin and Rieck, Bastian and Bronstein, Michael M. and Cremers, Daniel},
	booktitle = {ICLR 2025 Workshop on Machine Learning for Multiscale Physics (MLMP)},
	year      = {2025},
	month     = mar,
	publisher = {OpenReview},
	url       = {https://openreview.net/forum?id=SRCsyJafgP}
}

@inproceedings{koke2023resolvnet,
	title     = {ResolvNet: A Graph Convolutional Network with Multi-scale Consistency},
	author    = {Koke, Christian and Saroha, Abhishek and Shen, Yuesong and Eisenberger, Marvin and Cremers, Daniel},
	booktitle = {NeurIPS 2023 Workshop on New Frontiers in Graph Learning (GLFrontiers)},
	year      = {2023},
	month     = oct,
	publisher = {OpenReview},
	url       = {https://openreview.net/forum?id=V5eDwEDfXT}
}

@book{yosida1968functional,
	title={Functional Analysis},
	author={Yosida, K{\=o}saku},
	edition={2nd},
	year={1968},
	publisher={Springer},
	series={Grundlehren der Mathematischen Wissenschaften},
	volume={123},
	address={Berlin},
	isbn={978-3540041368}
}

@article{Nudelman2011,
	author    = {M. A. Nudelman},
	title     = {A Generalization of Stenger’s Lemma to Maximal Dissipative Operators},
	journal   = {Integral Equations and Operator Theory},
	year      = {2011},
	volume    = {70},
	number    = {3},
	pages     = {301--305},
	doi       = {10.1007/s00020-011-1884-1},
	issn      = {1420-8989},
	url       = {https://doi.org/10.1007/s00020-011-1884-1}
}

@article{Koke2020,
  author = {Christian Koke and Changsuk Noh and Dimitris G. Angelakis},
  title = {Dirac equation on a square waveguide lattice with site-dependent coupling strengths and the gravitational Aharonov-Bohm effect},
  journal = {Phys. Rev. A},
  volume = {102},
  pages = {013514},
  year = {2020},
  month = {July},
  doi = {10.1103/PhysRevA.102.013514},
  url = {https://doi.org/10.1103/PhysRevA.102.013514},
  note = {Published 27 July, 2020},
}

@article{Koke2016,
  author = {Christian Koke and Changsuk Noh and Dimitris G. Angelakis},
  title = {Dirac equation in 2-dimensional curved spacetime, particle creation, and coupled waveguide arrays},
  journal = {Annals of Physics},
  volume = {374},
  pages = {162--178},
  year = {2016},
  month = {November},
  publisher = {Elsevier},
  doi = {10.1016/j.aop.2016.09.002},
  url = {https://www.sciencedirect.com/science/article/abs/pii/S0003491616300973},
  note = {Cited by 34},
}

@phdthesis{balti2017,
  author       = {M. Balti},
  title        = {Laplaciens non auto-adjoints sur un graphe orienté},
  school       = {Université de Carthage et Université de Nantes},
  year         = {2017},
  address      = {Tunis, Tunisia; Nantes, France},
  note         = {Thèse de Doctorat},
}

@article{balti2017a,
  author    = {M. Balti},
  title     = {Non self-adjoint Laplacians on a directed graph},
  journal   = {Filomat},
  volume    = {18},
  year      = {2017},
  pages     = {5671--5683},
}

@article{balti2017b,
  author    = {M. Balti},
  title     = {On the eigenvalues of weighted directed graphs},
  journal   = {Complex Analysis and Operator Theory},
  volume    = {11},
  year      = {2017},
  pages     = {1387--1406},
}

@article{anne2018sectoriality,
  author    = {Colette Anné and Marwa Balti and Nabila Torki-Hamza},
  title     = {Sectoriality and Essential Spectrum of Non-Symmetric Graph Laplacians},
  journal   = {Complex Analysis and Operator Theory},
  year      = {2018},
  volume    = {13},
  number    = {3},
  pages     = {967--983},
  doi       = {10.1007/s11785-018-0817-2},
  url       = {https://doi.org/10.1007/s11785-018-0817-2},
  note      = {hal-01671142}
}

@book{hamilton2017mathematical,
  title={Mathematical Gauge Theory: With Applications to the Standard Model of Particle Physics},
  author={Hamilton, Mark J.D.},
  publisher={Springer},
  year={2017},
  series={Universitext},
  isbn={978-3-319-68438-3},
  url={https://zbmath.org/?q=an:1390.81005},
  note={Illustrations, Bibliography on pages 641-646},
  location={Cham},
  language={English}
}

@article{benamor2007sobolev,
  author    = {A. Ben Amor},
  title     = {Sobolev-Orlicz inequalities, ultracontractivity and spectra of time changed Dirichlet forms},
  journal   = {Mathematica Slovaca},
  volume    = {255},
  number    = {4},
  pages     = {627--647},
  year      = {2007},
}

@book{HislopSigal1996,
  author    = {P. D. Hislop and I. M. Sigal},
  title     = {Introduction to Spectral Theory: With Applications to Schr{\"o}dinger Operators},
  year      = {1996},
  publisher = {Springer},
  series    = {Applied Mathematical Sciences},
  volume    = {113},
  doi       = {10.1007/978-1-4612-3970-4},
  isbn      = {978-0-387-94521-3},
  address   = {New York},
}

@article{Triolo2024,
  author = {Salvatore Triolo},
  title = {Note on commuting quasi-nilpotent unbounded operators},
  journal = {Advances in Operator Theory},
  volume = {9},
  number = {1},
  pages = {8},
  year = {2024},
  doi = {10.1007/s43036-023-00305-6},
  url = {https://link.springer.com/article/10.1007/s43036-023-00305-6},
  note = {Received: 5 August 2023 / Accepted: 26 October 2023 / Published online: 11 December 2023}
}

@article{Arlinskiy2025,
	author       = {Arlinskiĭ, Y.M.},
	title        = {Compressions of Selfadjoint and Maximal Dissipative Extensions of Non-densely Defined Symmetric Operators},
	journal      = {Integral Equations and Operator Theory},
	volume       = {97},
	number       = {6},
	year         = {2025},
	doi          = {10.1007/s00020-024-02792-5},
	url          = {https://doi.org/10.1007/s00020-024-02792-5},
	publisher    = {Springer},
	received     = {2024-10-06},
	revised      = {2024-12-06},
	accepted     = {2024-12-12},
	published    = {2025-01-11}
}

@article{GohbergKrein1957,
	author       = {I.C. Gohberg and M.G. Krein},
	title        = {Fundamental aspects of defect numbers, root numbers and indexes of linear operators},
	journal      = {Uspekhi Matematicheskikh Nauk},
	volume       = {12},
	number       = {2},
	year         = {1957},
	pages        = {43--118},
	note         = {In Russian; English translation in \textit{AMS Translations}, Ser. 2, Vol. 13 (1960), pp. 185--264}
}

@article{Stenger1968,
	author       = {W. Stenger},
	title        = {On the projection of a selfadjoint operator},
	journal      = {Bulletin of the American Mathematical Society},
	volume       = {74},
	year         = {1968},
	pages        = {369--372}
}

@book{Thaller1992,
  author    = {Bernd Thaller},
  title     = {The Dirac Equation},
  publisher = {Springer-Verlag},
  year      = {1992},
  series    = {Texts and Monographs in Physics},
  address   = {Berlin, Heidelberg, New York},
  isbn      = {978-3-540-54883-6},
  note      = {Also available online: \url{https://doi.org/10.1007/978-3-662-02753-0}}
}

@book{Bettini2014,
  title     = {Introduction to Elementary Particle Physics},
  author    = {Bettini, Alessandro},
  year      = {2014},
  publisher = {Cambridge University Press},
  edition   = {2nd},
  isbn      = {9781107050402},
  url       = {https://www.cambridge.org/highereducation/books/introduction-to-elementary-particle-physics/9941F6C45FE55255F930BDD16665FEE0}
}

@article{hempel2006schrodinger,
  author    = {R. Hempel},
  title     = {Schrödinger operator with strong magnetic fields of compact support},
  journal   = {Recent Advances in Differential Equations and Mathematical Physics},
  series    = {Contemporary Mathematics},
  volume    = {412},
  pages     = {199--205},
  year      = {2006},
}

@article{nakamura2024remarks,
  title={Remarks on discrete Dirac operators and their continuum limits},
  author={Nakamura, Shu},
  journal={Journal of Spectral Theory},
  volume={14},
  number={1},
  pages={255--269},
  year={2024}
}

@book{peskin1995introduction,
  title={An Introduction to Quantum Field Theory},
  author={Peskin, Michael E. and Schroeder, Daniel V.},
  year={1995},
  publisher={Addison-Wesley},
  address={Reading, MA},
  edition={1st},
  isbn={978-0201503975}
}

@book{simon_operator_2015,
  author    = {Barry Simon},
  title     = {Operator Theory: A Comprehensive Course in Analysis, Part 4},
  year      = {2015},
  publisher = {American Mathematical Society},
  isbn      = {978-1-4704-1103-9},
  series    = {A Comprehensive Course in Analysis},
  edition   = {1st},
  pages     = {749},
  productcode = {SIMON/4},
  price     = {99.00},
  msc       = {Primary 47; 34; 46; Secondary 81; 35; 42; 22},
  note      = {A graduate-level textbook on operator theory, with a focus on Hilbert space, spectral theorem, trace class, Fredholm determinants, and unbounded self-adjoint operators.},
  address   = {Providence, RI},
  url       = {https://www.ams.org},
}

@article{Cornean2023,
  author = {Horia Cornean and Henrik Garde and Arne Jensen},
  title = {Discrete approximations to Dirac operators and norm resolvent convergence},
  journal = {J. Spectr. Theory},
  volume = {12},
  number = {4},
  pages = {1589--1622},
  year = {2023},
  doi = {10.4171/JST/438},
  url = {https://doi.org/10.4171/JST/438},
  note = {Submitted: 18 June 2022, Published: 18 May 2023},
  keywords = {Dirac operator, discrete Dirac operator, approximation},
  license = {CC-BY-4.0}
}

@article{barbaroux2019resolvent,
  author    = {Jean-Marie Barbaroux and Horia Cornean and Loïc Le Treust and Edgardo Stockmeyer},
  title     = {Resolvent Convergence to Dirac Operators on Planar Domains},
  journal   = {Annales Henri Poincaré},
  volume    = {20},
  pages     = {1877--1891},
  year      = {2019},
  publisher = {Springer},
}

@book{fukushima1994dirichlet,
  author    = {M. Fukushima and Y. Oshima and M. Takeda},
  title     = {Dirichlet Forms and Symmetric Markov Processes},
  publisher = {Walter de Gruyter},
  year      = {1994},
  address   = {Berlin-New York},
  isbn      = {978-3-11-013054-0},
}

@article{hempel2003magnetic,
  author    = {Reiner Hempel and Alexander Besch},
  title     = {Magnetic barriers of compact support and eigenvalues in spectral gaps},
  journal   = {Electronic Journal of Differential Equations (EJDE)},
  volume    = {2003},
  pages     = {Paper No. 48, 25 p.},
  year      = {2003},
  issn      = {1072-6691},
  note      = {electronic only},
}

@article{hempel1995strong,
  author    = {Rainer Hempel and Ira Herbst},
  title     = {Strong Magnetic Fields, Dirichlet Boundaries, and Spectral Gaps},
  journal   = {Communications in Mathematical Physics},
  volume    = {169},
  pages     = {237--259},
  year      = {1995},
  publisher = {Springer-Verlag},
}

@incollection{brasche2005resolvent,
  author    = {J. F. Brasche and M. Demuth},
  title     = {Resolvent differences for general obstacles},
  booktitle = {Nonlinear Elliptic and Parabolic Problems},
  editor    = {},
  series    = {Progress in Nonlinear Differential Equations and Their Applications},
  volume    = {64},
  pages     = {63--70},
  publisher = {Birkhäuser},
  year      = {2005},
}

@article{belhadjali2023large,
  author    = {Hichem BelHadjAli and Ali BenAmor and Johannes F. Brasche},
  title     = {Large coupling convergence with negative perturbations},
  journal   = {Journal of Mathematical Analysis and Applications},
  volume    = {490},
  number    = {1},
  pages     = {123--145},
  year      = {2023},
  publisher = {Elsevier},
  url       = {https://www.elsevier.com/locate/jmaa},
  address   = {Tunis, Tunisia; Clausthal-Zellerfeld, Germany},
}

@phdthesis{stollmann1994stoerungstheorie,
  author    = {P. Stollmann},
  title     = {Störungstheorie von Dirichletformen mit Anwendungen auf Schrödingeroperatoren},
  school    = {Frankfurt University},
  year      = {1994},
}

@article{benamor2008sharp,
  author    = {A. Ben Amor and J. F. Brasche},
  title     = {Sharp estimates for large coupling convergence with applications to Dirichlet operators},
  journal   = {Journal of Functional Analysis},
  volume    = {254},
  number    = {2},
  pages     = {454--475},
  year      = {2008},
}

@article{brasche2005dynkin,
  author    = {J. F. Brasche and M. Demuth},
  title     = {Dynkin's formula and large coupling convergence},
  journal   = {Journal of Functional Analysis},
  volume    = {219},
  number    = {1},
  pages     = {34--69},
  year      = {2005},
}

@book{teschl2014mathematical,
  author    = {Gerald Teschl},
  title     = {Mathematical Methods in Quantum Mechanics: With Applications to Schrödinger Operators},
  edition   = {2nd},
  series    = {Graduate Studies in Mathematics},
  volume    = {157},
  publisher = {American Mathematical Society},
  year      = {2014},
  isbn      = {9781470417048},
  language  = {English},
  note      = {Online Edition Available: ISBN 9781470418885},
}

@article{brasche2001upper,
  author    = {J. F. Brasche},
  title     = {Upper bounds for Neumann-Schatten norms},
  journal   = {Potential Analysis},
  volume    = {14},
  number    = {2},
  pages     = {175--205},
  year      = {2001},
}

@book{albeverio2004solvable,
  author    = {S. Albeverio and F. Gesztesy and R. H{\o}egh-Krohn and H. Holden},
  title     = {Solvable Models in Quantum Mechanics},
  publisher = {AMS Chelsea Publishing},
  year      = {2004},
  isbn      = {978-0-8218-3376-5},
}

@book{srednicki2007quantum,
  title={Quantum Field Theory},
  author={Srednicki, Mark},
  year={2007},
  publisher={Cambridge University Press},
  isbn={978-0-521-88018-4},
  address={Cambridge, UK},
  url={https://www.cambridge.org/core/books/quantum-field-theory/5D8A7A3A5DA56BB2A259A5825A381F13}
}

@article{Duran2024,
  author = {Joaquim Duran and Albert Mas},
  title = {Convergence of generalized MIT bag models to Dirac operators with zigzag boundary conditions},
  journal = {Analysis and Mathematical Physics},
  year = {2024},
  volume = {14},
  number = {4},
  pages = {85},
  doi = {10.1007/s13324-024-00946-7},
  url = {https://doi.org/10.1007/s13324-024-00946-7},
  issn = {1664-235X}
}

@incollection{belhadjali2011large,
  author    = {Hichem BelHadjAli and Ali BenAmor and Johannes F. Brasche},
  title     = {Large Coupling Convergence: Overview and New Results},
  booktitle = {Partial Differential Equations and Spectral Theory},
  editor    = {I. Gohberg and M. Demuth and B.-W. Schulze and I. Witt},
  series    = {Operator Theory: Advances and Applications},
  volume    = {211},
  publisher = {Springer Basel},
  year      = {2011},
  pages     = {73--117},
}

@book{kato1995perturbation,
  author    = {Tosio Kato},
  title     = {Perturbation Theory for Linear Operators},
  series    = {Classics in Mathematics},
  volume    = {132},
  publisher = {Springer-Verlag},
  year      = {1995},
  isbn      = {978-3-540-60526-3},
}

@article{simon1978canonical,
  author    = {Barry Simon},
  title     = {A canonical decomposition for quadratic forms with applications to monotone convergence theorems},
  journal   = {Journal of Functional Analysis},
  volume    = {28},
  number    = {1},
  pages     = {1--18},
  year      = {1978},
  doi       = {10.1016/0022-1236(78)90094-0},
}

@article{demuth1993rate,
  author    = {M. Demuth and F. Jeske and W. Kirsch},
  title     = {Rate of convergence for large coupling limits by Brownian motion},
  journal   = {Annales de l'I.H.P. Physique Théorique},
  volume    = {59},
  number    = {3},
  pages     = {327--355},
  year      = {1993},
  issn      = {0246-0211},
}

@article{demuth1995schrodinger,
  author    = {M. Demuth and W. Kirsch and I. McGillivray},
  title     = {Schr{\"o}dinger semigroups – geometric estimates in terms of the occupation time},
  journal   = {Communications in Partial Differential Equations},
  volume    = {20},
  number    = {1},
  pages     = {37--57},
  year      = {1995},
}

@article{bruneau2002spectral,
  author    = {V. Bruneau and G. Carbou},
  title     = {Spectral asymptotic in the large coupling limit},
  journal   = {Asymptotic Analysis},
  volume    = {29},
  number    = {1},
  pages     = {91--113},
  year      = {2002},
}

@article{demuth1991large,
  author    = {M. Demuth},
  title     = {On large coupling operator norm convergences of resolvent differences},
  journal   = {Journal of Mathematical Physics},
  volume    = {32},
  number    = {6},
  pages     = {1522--1530},
  year      = {1991},
}

@article{ArlinskiZagrebnov2010,
  title     = {Numerical range and quasi-sectorial contractions},
  author    = {Arlinskiĭ, Yury and Zagrebnov, Valentin},
  journal   = {Journal of Mathematical Analysis and Applications},
  volume    = {364},
  number    = {2},
  pages     = {641--655},
  year      = {2010},
  publisher = {Elsevier},
  doi       = {10.1016/j.jmaa.2010.01.036}
}

@inproceedings{
	koke2024holonets,
	title={HoloNets: Spectral Convolutions do extend to Directed Graphs},
	author={Christian Koke and Daniel Cremers},
	booktitle={The Twelfth International Conference on Learning Representations},
	year={2024},
	url={https://openreview.net/forum?id=EhmEwfavOW}
}

@inproceedings{GraphScatteringBeyond,
	author    = {Christian Koke and
	Gitta Kutyniok},
	title     = {Graph Scattering beyond Wavelet Shackles},
	booktitle = {Advances in Neural Information Processing Systems 35: Annual Conference
	on Neural Information Processing Systems 2022, NeurIPS 2020, November 28 - December
	9, 2022, New Orleans},
	year      = {2022},
	url       = {https://openreview.net/forum?id=ptUZl8xDMMN},
}

@inproceedings{limitless,
	author    = {Christian Koke},
	title     = {Limitless Stability for Graph Convolutional Networks},
	booktitle = {11th International Conference on Learning Representations, {ICLR} 2023,
	Kigali, Rwanda, May 1-5, 2023},
	publisher = {OpenReview.net},
	year      = {2023},
	url       = {https://openreview.net/forum?id=XqcQhVUr2h0}
}

@article{Veerman2020APO,
	title={A Primer on Laplacian Dynamics in Directed Graphs},
	author={J. J. P. Veerman and Robert Lyons},
	journal={Nonlinear Phenomena in Complex Systems},
	year={2020},
	url={https://api.semanticscholar.org/CorpusID:211066395}
}

@article{CaughmanVeerman2006,
  author    = {J. S. Caughman and J. J. P. Veerman},
  title     = {Kernels of Directed Graph Laplacians},
  journal   = {Dynamic Surveys},
  volume    = {13},
  year      = {2006},
  article   = {R39},
  doi       = {10.37236/1065},
  url       = {https://doi.org/10.37236/1065},
}
\end{document}